\theoremstyle{plain}
  \newtheorem{theorem}{Theorem}[section]
  \newtheorem{corollary}[theorem]{Corollary}
  \newtheorem{lemma}[theorem]{Lemma}
  \newtheorem{proposition}[theorem]{Proposition}
\theoremstyle{definition}
  \newtheorem{definition}[theorem]{Definition}
\newcommand{\op}{\mathrm{op}}
\newcommand{\cat}{\mathbf{Cat}}
\newcommand{\ob}{\mathrm{ob}}
\newcommand{\id}{\mathrm{id}}
\newcommand{\hocolim}{\mathrm{hocolim \,}}
\newcommand{\ra}{\longrightarrow}
\newcommand{\gr}{\mathrm{Gr}}
\newcommand{\D}{\mathcal{D}}
\newcommand{\U}{\mathcal{U}}
\renewcommand{\Bbb}{\mathbb}
\author{Kohei Tanaka}
\title {\textbf{\v{C}ech complexes for covers of small categories}}
\begin{document}

\renewcommand{\thesection}{\arabic{section}}

\maketitle

\begin{abstract}
We present a combinatorial analogue of the nerve theorem for covers of small categories, using the Grothendieck construction. 
We apply our result to prove the inclusion-exclusion principle for the Euler characteristic of a finite category.
\end{abstract}

{\footnotesize 2010 Mathematics Subject Classification : 55P10, 46M20}

{\footnotesize Keywords : nerve theorem, small categories, inclusion-exclusion principle, Euler characteristic}



\section{Introduction}

For an open cover $\U=\{U_{a}\}_{a \in A}$ of a space $X$, the \v{C}ech nerve $\check{C}(\U)$ of $\U$ is a 
simplicial space defined by
\[
\check{C}(\U)_{n} = \coprod_{a_{0}, \ldots, a_{n} \in A} U_{a_{0} \ldots a_{n}},
\]
where $U_{a_{0} \ldots a_{n}}$ denotes the intersection $U_{a_{0}} \cap \ldots \cap U_{a_{n}}$.
Here, the face maps omit indices, and the degeneracy maps insert copies of indices.
Dugger and Isaksen have proved that the natural map $\hocolim \check{C}(\U) \to X$ from the homotopy colimit of the \v{C}ech nerve to the original space is a weak homotopy equivalence \cite{DI04}.
In this paper, a combinatorial analogue of their result is provided for small categories.
Let $C$ be a small category, and let $\D=\{D_{a}\}_{a \in A}$ be a collection of subcategories of $C$ satisfying $C= \bigcup_{a \in A}D_{a}$.
We can define the {\em \v{C}ech nerve} $\check{C}(\D)$ of $\D$ as a simplicial object in the category of small categories, similarly to the above.
In the context of small categories, the Grothendieck construction is known as a model of the homotopy colimit of a diagram in small categories \cite{Tho79}.
The Grothendieck construction $\gr(\check{C}(\D))$ of the \v{C}ech nerve is also equipped with the natural functor $\gr(\check{C}(\D)) \to C$.
A natural question to ask is when this induces a homotopy equivalence on classifying spaces.

Unfortunately, it does not hold for every cover. For example, let $C$ be a small category, formed as follows:
\[
\xymatrix{
x \ar@/^/[r] \ar@/_/[r]& y \ar[r] & z,
}
\]
with the terminal object $z$. If $D_{1}$ is the full subcategory with $\ob(D_{1})=\{x,y\}$, and $D_{2}$ is the full subcategory with $\ob(D_{2})=\{y,z\}$, then $C=D_{1} \cup D_{2}$. 
The classifying space of $C$ is contractible; however, the classifying space of the Grothendieck construction for the \v{C}ech nerve of the cover $\{D_{1},D_{2}\}$ is homotopy equivalent to a circle.

In order to solve the problem above, we introduce two classes of covers of small categories consisting of {\em ideals} and {\em filters}, respectively.
These are generalized notions of ideals and filters of posets \cite{Sta12}, \cite{Zap98}.
An ideal $D$ of a small category $C$ is a full subcategory such that an object $y$ of $C$ belongs to $D$, whenever $C(y,x) \not= \emptyset$ for some object $x$ of $D$.
A cover $\D=\{D_{a}\}_{a \in A}$ of a small category $C$ is called an {\em ideal cover}, if $D_{a}$ is an ideal of $D$ for any $a \in A$. 
We can also define the dual notions; filters and {\em filter covers}.

\begin{theorem}
Let $\D$ be an ideal cover or a filter cover of a small category $C$.
The natural functor $\gr(\check{C}(\D)) \to C$ induces a homotopy equivalence between their classifying spaces.
\end{theorem}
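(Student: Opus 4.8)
The plan is to apply Quillen's Theorem A to the natural functor $\pi \colon \gr(\check{C}(\D)) \to C$. Recall that an object of $\gr(\check{C}(\D))$ may be written as a triple $((n;\vec{a};e))$ with $\vec{a}=(a_{0},\ldots,a_{n})$ and $e$ an object of $D_{\vec{a}} := D_{a_{0}} \cap \cdots \cap D_{a_{n}}$, that a morphism to $((m;\vec{b};d))$ consists of an order-preserving map $u\colon [m]\to[n]$ with $b_{j}=a_{u(j)}$ together with a morphism $\phi\colon e \to d$ of $C$ lying in every $D_{b_{j}}$, and that $\pi$ records $e$ on objects and $\phi$ on morphisms. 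For an ideal cover I would verify the hypothesis of Theorem A for the comma categories $c \downarrow \pi$; for a filter cover the argument is the formal dual, using $\pi \downarrow c$ and reversing all arrows, so I concentrate on the ideal case.

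Fix an object $c$ of $C$ and set $A_{c}=\{a \in A : c \in D_{a}\}$, which is nonempty because $\D$ covers $C$. The first step is to exploit the ideal hypothesis. An object of $c \downarrow \pi$ is a pair $(((n;\vec{a};e)),g)$ with $g\colon c \to e$ a morphism of $C$. Since $e \in D_{a_{i}}$ and $C(c,e) \neq \emptyset$, the defining property of an ideal forces $c \in D_{a_{i}}$, so that every index $a_{i}$ lies in $A_{c}$. This is exactly the point at which covers that are not ideals break down, as the circle example of the introduction illustrates. In particular $c$ itself is an object of every intersection $D_{\vec{a}}$ that actually occurs, and because each $D_{a_{i}}$ is full, every such $g$ is automatically a morphism of $D_{\vec{a}}$.

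Next I would contract $c\downarrow\pi$. The observation above makes the normalization $Q$, sending $(((n;\vec{a};e)),g)$ to $(((n;\vec{a};c)),\id_{c})$ and a morphism $(u,\phi)$ to $(u,\id_{c})$, a well-defined endofunctor of $c\downarrow\pi$; and the morphisms $(\id_{[n]},g)\colon (((n;\vec{a};c)),\id_{c}) \to (((n;\vec{a};e)),g)$ assemble into a natural transformation $Q \Rightarrow \id$, whose naturality square is precisely the comma-category relation $\phi g = g'$. Since $Q$ restricts to the identity on its image, $B(c\downarrow\pi)$ is homotopy equivalent to the classifying space of that image, the full subcategory on the normalized objects. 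This image is isomorphic to the category of simplices of the simplicial set $[n] \mapsto (A_{c})^{n+1}$, which is the nerve of the indiscrete category on $A_{c}$; as $A_{c}$ is nonempty this category is equivalent to the terminal category, so by the standard fact that the classifying space of the category of simplices recovers the realization, $B(c\downarrow\pi)$ is contractible. (Equivalently, one may note that $c\downarrow\pi$ is itself the Grothendieck construction over $\Delta^{\op}$ of the simplicial category $[n]\mapsto \coprod_{\vec{a}\in (A_{c})^{n+1}}(c\downarrow D_{\vec{a}})$, each value of which is contractible because $(c,\id_{c})$ is initial there, and then invoke \cite{Tho79} a second time.)

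With every comma category contractible, Quillen's Theorem A yields that $\pi$ induces a homotopy equivalence on classifying spaces. The hard part will be the contraction of the comma category: one must use the ideal property to pin all indices into $A_{c}$ \emph{and} recognize the resulting simplicial indexing as an indiscrete nerve, since neither fact alone suffices and both the fullness and the ideal conditions are needed. The filter case then requires only the dual of this analysis, with $\pi\downarrow c$ in place of $c\downarrow\pi$.
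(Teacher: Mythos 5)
Your proof is correct, and while it shares the paper's overall strategy (Quillen's Theorem A for the ideal case, plus opposite categories for the filter case), the fiber analysis runs along a genuinely different route. The paper works with the left fibers $\rho \mathord{\downarrow} x$, whose objects are pairs $(y_{b_{0}\ldots b_{n}}, f\colon y \to x)$: it fixes a single index $a$ with $x \in \ob(D_{a})$, uses the ideal condition to transfer membership backward along $f$ (giving $y \in D_{a}$, so the index $a$ can be appended), and then compares $\rho\mathord{\downarrow}x$ with the over category $\gr(\check{C}(\D))\mathord{\downarrow}x_{a}$, which is contractible because it has a terminal object; a natural transformation $GF \Rightarrow \id$ shows $B(\rho\mathord{\downarrow}x)$ is dominated by that contractible space. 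You instead take the dual fibers $c\mathord{\downarrow}\pi$ and exploit the same backward-closure of ideals in the other available way: membership of $e$ in each $D_{a_{i}}$ transfers along $g\colon c \to e$ to force \emph{every} index occurring in the fiber into $A_{c}$, after which your normalization $Q$ retracts the fiber onto the full subcategory of objects $(c_{\vec{a}},\id_{c})$, which is (strictly speaking, the opposite of) the category of simplices of the nerve of the indiscrete category on $A_{c}$ --- a harmless discrepancy, since $B\mathcal{C} \cong B\mathcal{C}^{\op}$. It is a nice and not obvious point that both kinds of comma category can be contracted for an ideal cover; the paper uses this flexibility only implicitly, when it converts the filter case into statements about right fibers of $\rho$. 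As for what each route buys: the paper's contraction is completely elementary (a terminal object and one natural transformation suffice), whereas yours imports the standard but nontrivial fact that the classifying space of the category of simplices of a simplicial set $X$ has the homotopy type of $|X|$ --- or, in your parenthetical variant, a second application of Thomason's theorem; in exchange you obtain a sharper structural picture of the homotopy fiber than bare contractibility, and your write-up isolates precisely where fullness and the ideal condition each enter. Your appeal to both versions of Theorem A (right fibers for ideals, left fibers for filters) is legitimate, so the proof stands as given.
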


As an application of this result, we will show that we can obtain the inclusion-exclusion principle of the Euler characteristics for finite categories with ideal covers or filter covers.
The Euler characteristic $\chi(C)$ of a finite category $C$ is introduced by Leinster in \cite{Lei08}.
He calculates the Euler characteristic of the Grothendieck construction for a diagram in finite categories.
By applying this to the (reduced) \v{C}ech nerve of a cover for a finite category, we obtain the following formula.

\begin{theorem}[Inclusion-exclusion principle]
Let $\D=\{D_{a}\}_{a \in A}$ be an ideal cover or a filter cover of a finite category $C$, indexed by a totally ordered finite set $A$. 
If each intersection $D_{a_{0} \ldots a_{i}}$ and $C$ have Euler characteristics, then we have
\[
\chi(C) = \sum_{i=0}\sum_{a_{0}< \ldots < a_{i}} (-1)^{i} \chi(D_{a_{0} \ldots a_{i}}).
\]
\end{theorem}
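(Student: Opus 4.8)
The plan is to combine the nerve-type result (Theorem~1.1) with Leinster's computation of the Euler characteristic of a Grothendieck construction, after first replacing the full \v{C}ech nerve by its reduced version so that everything in sight is finite. I would begin with the \emph{reduced} \v{C}ech nerve $\check{C}^{\mathrm{red}}(\D)$, the semisimplicial object whose object of $i$-simplices is $\coprod_{a_{0}<\ldots<a_{i}} D_{a_{0}\ldots a_{i}}$, with faces given by the evident inclusions $D_{a_{0}\ldots a_{i}} \hookrightarrow D_{a_{0}\ldots \hat{a}_{j}\ldots a_{i}}$. Because $A$ is finite and totally ordered, this is trivial for $i \geq |A|$, so it is a diagram indexed by the finite poset $\mathcal{I}$ of nonempty subsets of $A$, and hence $\gr(\check{C}^{\mathrm{red}}(\D))$ is a \emph{finite} category. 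The first task is to see that $B\gr(\check{C}^{\mathrm{red}}(\D)) \simeq BC$: here I would use that discarding degeneracies does not change the homotopy type of a realization (the thin and fat realizations of a good simplicial object agree), so $B\gr(\check{C}^{\mathrm{red}}(\D)) \simeq B\gr(\check{C}(\D))$, and then Theorem~1.1 supplies $B\gr(\check{C}(\D)) \simeq BC$.

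Next I would invoke Leinster's identity $\chi(\Gamma)=\chi(B\Gamma)$ for a finite category $\Gamma$ admitting an Euler characteristic, together with homotopy invariance of the topological Euler characteristic. Granting that $\gr(\check{C}^{\mathrm{red}}(\D))$ admits an Euler characteristic — which will follow from the hypothesis that each $D_{a_{0}\ldots a_{i}}$ does, via the Grothendieck formula below — this yields $\chi(C)=\chi(BC)=\chi(B\gr(\check{C}^{\mathrm{red}}(\D)))=\chi(\gr(\check{C}^{\mathrm{red}}(\D)))$.

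Then I would apply Leinster's formula for the Euler characteristic of the Grothendieck construction of a diagram $F\colon \mathcal{I}\to\cat$, namely that it is the weighted sum $\chi(\gr F)=\sum_{S}k(S)\,\chi(F(S))$ of the Euler characteristics of the fibres, where $k$ is the (co)weighting of the indexing category $\mathcal{I}$. With $F(S)=D_{a_{0}\ldots a_{i}}$ for $S=\{a_{0}<\ldots<a_{i}\}$, the remaining point is to compute the weighting of the simplex poset $\mathcal{I}$ and to verify $k(S)=(-1)^{|S|-1}=(-1)^{i}$. This sign is precisely the one governing the alternating-sum formula $\chi(\|X_{\bullet}\|)=\sum_{i}(-1)^{i}\chi(X_{i})$ for the realization of a reduced semisimplicial object, which gives an independent check (and, as a sanity test, $\sum_{S}(-1)^{|S|-1}=1=\chi(\mathcal{I})$, consistent with $\mathcal{I}$ having a top element). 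Using additivity of $\chi$ over the coproduct, $\chi(F(S))$ in degree $i$ expands as $\sum_{a_{0}<\ldots<a_{i}}\chi(D_{a_{0}\ldots a_{i}})$, and collecting terms produces exactly the claimed identity.

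The main obstacle I anticipate is the reduction to the finite setting: the Grothendieck construction of the \emph{full} \v{C}ech nerve is indexed by $\Delta^{\op}$ and is therefore infinite, so it has no Euler characteristic directly, and one must argue with some care that passing to the reduced nerve preserves both the homotopy type (to keep Theorem~1.1 in force) and the value of the alternating sum. Secondary care is needed to confirm the hypotheses of Leinster's Grothendieck-construction formula — in particular that $\gr(\check{C}^{\mathrm{red}}(\D))$ itself admits an Euler characteristic — and to pin down the weighting of the indexing poset so that the signs emerge as $(-1)^{i}$.
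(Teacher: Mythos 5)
Your computation of $\chi(\gr(\check{C}^{o}(\widetilde{\D})))$ --- pass to the reduced nerve, apply Leinster's Grothendieck-construction formula, identify the coweighting $(-1)^{i}$ on the indexing poset --- is exactly the paper's first step. The gap is in the bridge you build from that number back to $\chi(C)$. You argue $\chi(C)=\chi(BC)=\chi(B\gr(\check{C}^{o}(\widetilde{\D})))=\chi(\gr(\check{C}^{o}(\widetilde{\D})))$, invoking ``Leinster's identity $\chi(\Gamma)=\chi(B\Gamma)$'' plus homotopy invariance of the topological Euler characteristic. But Leinster's Euler characteristic is \emph{not} a homotopy invariant of the classifying space, and the identity $\chi(\Gamma)=\chi(B\Gamma)$ holds in \cite{Lei08} only under a finiteness hypothesis that the present theorem does not grant: the nerve of $\Gamma$ must have finitely many nondegenerate simplices, so that $B\Gamma$ is a finite complex. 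Concretely, take $C$ to be the two-element monoid $\{1,e\}$ with $e^{2}=e$, viewed as a one-object category, with the ideal cover $\D=\{C\}$ (a category is an ideal of itself, and $\chi(C)=1/2$ exists, so all hypotheses of the theorem hold). Since $e$ is a zero element, the nerve of $C$ admits an extra degeneracy and $BC$ is contractible; hence $\chi(BC)=1\neq 1/2=\chi(C)$, even though both sides are perfectly well defined. The theorem's conclusion in this example is the tautology $1/2=1/2$, but the first link of your chain is false, and the same failure infects $\gr(\check{C}^{o}(\widetilde{\D}))$, which contains the endomorphism $e$. This also shows that no proof using only Theorem \ref{rho} (a homotopy equivalence of classifying spaces) can suffice; your route would establish the theorem only when all the classifying spaces in sight are finite complexes, e.g.\ for finite posets.

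The paper closes this gap combinatorially and, perhaps surprisingly, never uses Theorem \ref{rho} in the proof. The substitute for homotopy invariance is Proposition \ref{finite_adjoint}: a cover indexed by a finite set is automatically locally finite, so the natural functor $\tilde{\rho}:\gr(\check{C}^{o}(\widetilde{\D}))\to C$ has a \emph{left adjoint}; Leinster's Proposition 2.4 then says that an adjunction between finite categories, both of which have Euler characteristics, forces the Euler characteristics to agree. Adjointness, not homotopy equivalence of classifying spaces, is the invariance that Leinster's $\chi$ respects, and it is a strictly finer statement than Theorem \ref{rho}. (It also replaces your thin-versus-fat realization step, which had its own wrinkle: the reduced nerve is not the restriction of $\check{C}(\D)$ to injective maps, so a generic ``goodness'' argument does not literally apply; the paper instead uses Propositions \ref{I} and \ref{ordered_adjoint} for such comparisons.) The rest of your outline --- the identification of the indexing category with the finite poset of nonempty subsets of $A$ (the paper's $\Delta_{A^{\sharp}}$, via Lemma \ref{unique}), the coweighting $v[n]=(-1)^{n}$, additivity of $\chi$ over coproducts, and the sign check --- matches the paper's proof; the filter case, which you did not discuss, is handled there by passing to opposite categories using $\chi(X)=\chi(X^{\op})$.
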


Indeed, Leinster has demonstrated the inclusion-exclusion principle of the Euler characteristic for finite sets, as Example 3.4 (d) in \cite{Lei08}.
The theorem above is a generalization of that result.

The rest of this paper is organized as follows.
Section 2 describes the homotopy theory of \v{C}ech complexes, for covers of small categories.
Subsequently, we focus on the inclusion-exclusion principle for Euler characteristics of finite categories in section 3.

\section{The \v{C}ech nerve for covers of small categories}

Let $C$ be a small category, and let $\D=\{D_{a}\}_{a \in A}$ be a collection of subcategories of $C$ indexed by a set $A$.
The intersection $\bigcap_{a \in A} D_{a}$ is a subcategory of $C$, whose set of objects is $\bigcap_{a \in A} \ob(D_{a})$ 
and set of morphisms is $\bigcap_{a \in A} D_{a}(x,y)$, for each pair of objects $x$ and $y$.

On the other hand, the union $\bigcup_{a \in A} \D_{a}$ is a subcategory of $C$, whose set of objects is $\bigcup_{a \in A} \ob(D_{a})$ 
and the set of morphisms is generated by compositions of morphisms of $D_{a}$. That is: 
\[
\left(\bigcup_{a \in A} D_{a} \right)(x,y)=\{f_{n} \ldots f_{0} \mid f_{i} \in D_{a_{i}}(z_{i},z_{i+1}), z_{0}=x, z_{n+1}=y\},
\]
for each pair of objects $x$ and $y$.

\begin{definition}
Let $C$ be a small category.
A collection of subcategories $\D=\{D_{a}\}_{a \in A}$ of $C$ is called a {\em cover} if $C=\bigcup_{a \in A} D_{a}$.
\end{definition}

\begin{definition}[\v{C}ech nerve]\label{nerve}
Let $\D=\{D_{a}\}_{a \in A}$ be a cover of a small category $C$.
The {\em \v{C}ech nerve} $\check{C}(\D)$ is a simplicial object in the category $\cat$ of small categories, defined as 
\[
\check{C}(\D)_{n} = \coprod_{a_{0}, \ldots,a_{n} \in A} D_{a_{0} \ldots a_{n}},
\]
where $D_{a_{0} \ldots a_{n}}$ denotes the intersection $D_{a_{0}} \cap \cdots \cap D_{a_{n}}$.
Let us denote an object (a morphism) $x$ of $\check{C}(\D)_{n}$ belonging to $D_{a_{0} \ldots a_{n}}$ by $x_{a_{0} \ldots a_{n}}$.
The face map $d_{i} : \check{C}(\D)_{n} \to \check{C}(\D)_{n-1}$ is the functor omitting the index $a_{i}$. That is: 
\[
d_{i}(x_{a_{0} \ldots a_{n}}) = x_{a_{0} \ldots \hat{a_{i}} \ldots a_{n}},
\]
on objects and morphisms. Similarly, the degeneracy map $s_{j} : \check{C}(\D)_{n} \to \check{C}(\D)_{n+1}$ is the functor inserting the index $a_{j}$ between $a_{j}$ and $a_{j+1}$. That is:
\[
s_{j}(x_{a_{0} \ldots a_{n}}) = x_{a_{0} \ldots a_{j}a_{j}a_{j+1} \ldots a_{n}},
\]
on objects and morphisms.

In other words, $\check{C}(\D)$ is a functor from $\Delta^{\op}$ to the category $\cat$, where $\Delta$ consists of totally ordered finite sets $[n]=\{0, \cdots,n\}$ and order preserving maps. 
It takes $[n]$ to $\check{C}(\D)_{n}$, and an order preserving map $\varphi : [m] \to [n]$ in $\Delta$ to the functor
$\varphi_{*} : \check{C}(\D)_{n} \to \check{C}(\D)_{m}$. Here, $\varphi_{*}$ is given by $\varphi_{*}(x_{a_{0} \ldots a_{n}}) =x_{b_{0} \ldots b_{m}}$ on objects and morphisms, satisfying $b_{j}=a_{\varphi(j)}$ for each $j \in \{0, \ldots, m\}$.
\end{definition}

A (pseudo) functor from a small category to $\cat$ provides 
a new small category, called the Grothendieck construction.

\begin{definition}[Grothendieck construction]
Let $C$ be a small category, and let $F : C \to \cat$ be a functor.
The {\em Grothendieck construction} $\gr(F)$ is a small category defined by the following:
\begin{itemize}
\item The set of objects consists of pairs $(c,x)$ of an object $c$ of $C$ and an object $x$ of $F(c)$.
\item For each pair of objects $(c,x)$ and $(d,y)$ of $\gr(F)$, the set of morphisms $\gr(F)((c,x),(d,y))$ consists of pairs of morphisms $(f,g)$, where $f \in C(c,d)$ and $g \in F(d)(F(f)(x),y)$.
\item For each composable pair of morphisms $(f_{1},g_{1}) \in \gr(F)((c,x),(d,y))$ and $(f_{2},g_{2}) \in \gr((d,y),(e,z))$, 
the composition is given by $(f_{2},g_{2})(f_{1},g_{1}) = (f_{2} f_{1}, g_{2}(F(f_{2})(g_{1})))$.
\end{itemize}
\end{definition}

We can take the Grothendieck construction of the \v{C}ech nerve $\check{C}(\D) : \Delta^{\op} \to \cat$ for a cover $\D$ of a small category $C$.
Let us see the details of this category. 
The set of objects of $\gr(\check{C}(\D))$ consists of objects $x_{a_{0} \ldots a_{n}}$, indexed by $a_{0}, \cdots, a_{n} \in A$ and $n \geq 0$.
The set of morphisms $\gr(\check{C}(\D))(x_{a_{0} \ldots a_{n}},y_{b_{0} \ldots b_{m}})$ consists of pairs $(\varphi,f_{b_{0} \ldots b_{m}})$ of an order preserving map $\varphi : [m] \to [n]$ such that $b_{j}=a_{\varphi(j)}$ for each $j$, and a morphism $f_{b_{0} \ldots b_{m}} : x_{b_{0} \ldots b_{m}} \to y_{b_{0} \ldots b_{m}}$ of $D_{b_{0} \ldots b_{m}}$.
It is equipped with the canonical functor $\rho : \gr(\check{C}(\D)) \to C$ that eliminates indices. That is,
$\rho(x_{a_{0} \ldots a_{n}})= x$ on objects, and $\rho(\varphi,f_{a_{0} \ldots a_{n}})=f$ on morphisms.
A natural question is to ask when the functor $\rho$ induces a homotopy equivalence on classifying spaces.
The classifying space $BC$ of a small category $C$ is constructed as the geometric realization of the nerve of $C$ (see \cite{Seg68}, \cite{Qui73}, \cite{Hir03}, for homotopical properties of classifying spaces). For a cover of a small category, the classifying space of the Grothendieck construction for the \v{C}ech nerve is called the {\em \v{C}ech complex}.

\begin{definition}
Let $C$ be a small category. An {\em ideal} $D$ of $C$ is a full subcategory such that an object $y$ of $C$ belongs to $D$, whenever $C(y,x) \not= \emptyset$ for some object $x$ of $D$.
Dually, an {\em filter} $D$ of $C$ is a full subcategory such that an object $y$ of $C$ belongs to $D$, whenever $C(x,y) \not= \emptyset$ for some object $x$ of $D$.
A cover $\D=\{D_{a}\}_{a \in A}$ of $C$ is called an {\em ideal} (a {\em filter}) {\em cover} if $D_{a}$ is an ideal (a filter) for any $a \in A$.
\end{definition}

The notions above are generalizations of ideals and filters of posets \cite{Sta12}, \cite{Zap98}.
Let $D$ be a full subcategory of a small category $C$. The category of complement $C \backslash D$ is defined as the full subcategory whose set of objects is $\ob(C) \backslash \ob(D)$.
If $D$ is an ideal (a filter), then $C \backslash D$ is a filter (an ideal).
In other words, the ideal $D$ yields a functor $C \to \mathcal{P}$, where $\mathcal{P}$ is the poset formed of $0<1$.
The functor sends $\ob(D)$ to $\{0\}$ and $\ob(C \backslash D)$ to $\{1\}$. Conversely, for a functor $F : C \to \mathcal{P}$, the category of fiber $F^{-1}(0)$ is an ideal,
and $F^{-1}(1)$ is a filter of $C$. 

\begin{theorem}\label{rho}
Let $\D=\{D_{a}\}_{a \in A}$ be an ideal cover or a filter cover of a small category $C$.
Then, the natural functor $\rho : \gr(\check{C}(\D)) \to C$ induces a homotopy equivalence between their classifying spaces.
\end{theorem}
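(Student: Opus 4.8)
The plan is to apply Quillen's Theorem A \cite{Qui73} to the functor $\rho : \gr(\check{C}(\D)) \to C$. It suffices to prove that for every object $c$ of $C$ a suitable comma category over $c$ has contractible classifying space; by Theorem A this forces $B\rho$ to be a homotopy equivalence. For an ideal cover I would use the comma category $(c\downarrow\rho)$, whose objects are pairs $(x_{a_0\ldots a_n},f)$ with $x_{a_0\ldots a_n}$ an object of $\gr(\check{C}(\D))$ and $f : c \to x$ a morphism of $C$, and whose morphisms are the morphisms of $\gr(\check{C}(\D))$ compatible with the structure maps from $c$. For a filter cover I would instead use the dual comma category $(\rho\downarrow c)$; the filter case is entirely dual to the ideal case (equivalently, one applies the ideal statement to $C^{\op}$ with the ideal cover $\{D_a^{\op}\}$, using $B(\mathcal{A}^{\op})=B\mathcal{A}$), so I focus on ideals.

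Fix $c$ and write $A_c=\{a\in A : c\in\ob(D_a)\}$, which is nonempty because $\D$ covers $C$. The first key point is that the ideal hypothesis collapses $(c\downarrow\rho)$ onto the indices in $A_c$: given an object $(x_{a_0\ldots a_n}, f:c\to x)$ we have $x\in D_{a_i}$ for every $i$, and since each $D_{a_i}$ is an ideal containing the target of $f:c\to x$, the source $c$ also lies in $D_{a_i}$; hence $a_i\in A_c$ and $c\in D_{a_0\ldots a_n}$. The second key point is fullness: ideals are full subcategories, so the intersection $D_{a_0\ldots a_n}$ is full, and therefore $f:c\to x$ is itself a morphism of $D_{a_0\ldots a_n}$. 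These two facts are exactly what I need to build a contraction.

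Using them, I would define an endofunctor $G$ of $(c\downarrow\rho)$ by $G(x_{a_0\ldots a_n},f)=(c_{a_0\ldots a_n},\id_c)$ and $G(\varphi,h)=(\varphi,\id_c)$, which is well defined precisely because $c\in D_{a_0\ldots a_n}$. The morphisms $(\id_{[n]},f):(c_{a_0\ldots a_n},\id_c)\to(x_{a_0\ldots a_n},f)$ — available because $f$ is a morphism of $D_{a_0\ldots a_n}$ — then assemble into a natural transformation $\eta:G\Rightarrow\id_{(c\downarrow\rho)}$, the naturality square reducing to the defining relation $h\circ f=g$ of a morphism of $(c\downarrow\rho)$ together with the composition formula of the Grothendieck construction. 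Hence $BG\simeq\id$. Finally, $G$ factors through the subcategory $\mathcal{Q}_c$ spanned by the objects $c_{a_0\ldots a_n}$ with $a_i\in A_c$ and the morphisms $(\varphi,\id_c)$; this $\mathcal{Q}_c$ is the Grothendieck construction of the \v{C}ech nerve of the trivial cover of the one-object category by the family $A_c$, so by Thomason's theorem \cite{Tho79} its classifying space is the realization of the simplicial set $[n]\mapsto A_c^{n+1}$, the nerve of the indiscrete category on the nonempty set $A_c$, which is contractible. Since $B(c\downarrow\rho)$ is a homotopy retract of the contractible space $B\mathcal{Q}_c$ (as $\id\simeq BG$ factors through $B\mathcal{Q}_c$), it is contractible, completing the verification of the hypothesis of Theorem A.

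The routine parts are the index bookkeeping and the composition formula in the Grothendieck construction used to check that $G$ is a functor and $\eta$ is natural. The main obstacle, and the only place the ideal/filter hypothesis is essential, is the collapsing step: showing that every object of the comma category is forced into the indices $A_c$ and that the structure morphism $f$ lands in the intersection $D_{a_0\ldots a_n}$. Without down-closedness and fullness — as in the circle counterexample of the introduction — the transformation $\eta$ cannot be defined, the comma category need not be contractible, and Theorem A does not apply.
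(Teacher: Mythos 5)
Your argument is correct, but it is not the paper's: the two proofs apply Quillen's Theorem A on opposite sides and contract the fibers by different mechanisms. For an ideal cover the paper uses the left fibers $\rho \mathord{\downarrow} x$ (objects $(y_{b_0 \ldots b_n}, f : y \to x)$): it fixes one index $a$ with $x \in \ob(D_a)$, uses the ideal property to append $a$ to the index string of $y$, and compares $\rho \mathord{\downarrow} x$ with the over category $\gr(\check{C}(\D)) \mathord{\downarrow} x_a$, which is contractible because it has a terminal object. You instead work with the right fibers $c \mathord{\downarrow} \rho$, use the ideal property to force the fixed object $c$ into every intersection $D_{a_0 \ldots a_n}$ that occurs, and collapse the whole fiber onto the constant subcategory $\mathcal{Q}_c$, whose contractibility costs you Thomason's theorem plus the identification of the homotopy colimit of the simplicial set $[n] \mapsto A_c^{n+1}$ with its (contractible) realization --- a step justified by the paper's own discussion after Theorem \ref{rho}, but one the paper's terminal-object trick avoids entirely. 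Both proofs invoke the hypothesis at the same two points (ideal-closure to place objects, fullness to obtain the connecting morphism), and both finish with the same retract-of-a-contractible-space observation. What your route buys is a cleaner filter case: the literally dual collapse on $\rho \mathord{\downarrow} c$, where the filter property now forces $c \in \ob(D_{b_i})$, goes through verbatim and stays entirely inside $\gr(\check{C}(\D))$. The paper instead passes to $C^{\op}$ and asserts that the left fiber of $\rho' : \gr(\check{C}(\D^{\op})) \to C^{\op}$ coincides with the right fiber $x \mathord{\downarrow} \rho$ of $\rho$; that identification is delicate, because in $\gr(\check{C}(\D^{\op}))$ the $D$-components of morphisms are reversed while the index maps $\varphi$ are not, so $\gr(\check{C}(\D^{\op}))$ is not the opposite of $\gr(\check{C}(\D))$, and the two fibers in question share objects but not morphisms. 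Your parenthetical alternative (``equivalently, apply the ideal case to $C^{\op}$, using $B(\mathcal{A}^{\op}) = B\mathcal{A}$'') inherits exactly this subtlety, so in a final write-up you should rely on your stated dual-comma-category argument rather than on that remark; with that caveat, your proposal is complete and correct.
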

\begin{proof}
We first consider the case in which $\D$ is an ideal cover. 
We use Quillen's theorem A \cite{Qui73} for $\rho$.
In order to apply this, we examine the left homotopy fiber $\rho \mathord{\mathord{\downarrow}} x$ of $\rho$, over an object $x$ of $C$.
The set of objects consists of pairs $(y_{b_{0} \ldots b_{n}},f)$ of an object $y_{b_{0} \ldots b_{n}}$ of $\gr(\check{C}(\D))$, and a morphism $f : y \to x$ of $C$.
The set of morphisms $(\rho \mathord{\mathord{\downarrow}} x)((y_{b_{0} \ldots b_{n}},f),(z_{c_{0} \ldots c_{m}},g))$ consists of morphisms 
$(\varphi,h_{c_{0} \ldots c_{m}}) : y_{b_{0} \ldots b_{n}} \to z_{c_{0} \ldots c_{m}}$ of $\gr(\check{C}(\D))$, such that $g \circ h=f$.

On the other hand, we choose and fix an index $a \in A$ such that $x \in \ob(D_{a})$.
The set of objects of the over category $\gr(\check{C}(\D)) \mathord{\downarrow} x_{a}$ consists of morphisms $(v_{j},f_{a}) : y_{a_{0} \ldots a_{n}} \to x_{a}$ of $\gr(\check{C}(\D))$. 
Here, $v_{j}$ is the map $[0] \to [n]$ choosing $j \in \{0, \ldots, n\}$, and $f_{a}$ is a morphism from $y_{a_{j}}=y_{a}$ to $x_{a}$ of $D_{a}$.

The set of morphisms $\left( \gr(\check{C}(\D)) \mathord{\downarrow} x_{a} \right)((v_{j},f_{a}),(v_{k},g_{a}))$ 
consists of morphisms $(\varphi,h)$ of $\gr(\check{C}(\D))$, such that $(v_{k},g_{a}) \circ (\varphi,h) = (v_{j},f_{a})$.

A functor 
\[
F : \rho \mathord{\downarrow} x \ra \gr(\check{C}(\D)) \mathord{\downarrow} x_{a}
\]
is defined by $F(y_{a_{0} \ldots a_{n}},f)=(v_{n+1},f_{a}) : y_{a_{0} \ldots a_{n}a} \to x_{a}$ on objects. 
Note that the object $y$ belongs to $D_{a}$, since $D_{a}$ is an ideal. A morphism $(\varphi,h_{b_{0} \ldots b_{m}}) : (y_{a_{0} \ldots a_{n}},f) \to (z_{b_{0} \ldots b_{m}},g)$ of $\rho \mathord{\downarrow} x$ is sent to a morphism $(\widetilde{\varphi},h_{b_{0} \ldots b_{m}a}) : (v_{n+1},f_{a}) \to (v_{m+1},g_{a})$ of $\gr(\check{C}(\D)) \mathord{\downarrow} x_{a}$, 
where $\tilde{\varphi} : [m+1] \to [n+1]$ is the extension of $\varphi : [m] \to [n]$ with $\tilde{\varphi}(m+1)=n+1$.
Conversely, a functor 
\[
G : \gr(\check{C}(\D)) \mathord{\downarrow} x_{a} \ra \rho \mathord{\downarrow} x
\]
is defined by $G(v_{j},g_{a})=(y_{a_{0} \ldots a_{n}},g)$, for an object $(v_{j},g_{a}) : y_{a_{0} \ldots a_{n}} \to x_{a}$ of $\gr(\check{C}(\D)) \mathord{\downarrow} x_{a}$.
There exists a natural transformation $t : GF \Rightarrow \mathrm{id}$ defined by 
\[
t(y_{a_{0} \ldots a_{n}},f)=(d_{n+1},\id_{a_{0} \ldots a_{n}}) : (y_{a_{0} \ldots a_{n}a},f) \ra (y_{a_{0} \ldots a_{n}},f).
\]
Note that the classifying space of the over category $\gr(\check{C}(\D)) \mathord{\downarrow} x_{a}$ is contractible, since it has a terminal object.
We conclude that the classifying spaces of $\gr(\check{C}(\D)) \mathord{\downarrow} x_{a}$ and $\rho \mathord{\downarrow} x$ are homotopy equivalent and contractible.
Quillen's theorem A completes the proof for the case of ideal covers.
If $\D$ is a filter cover of $C$, the opposite cover $\D^{\op}=\{D_{a}^{\op}\}_{a \in A}$ is an ideal cover of $C^{\op}$.
Hence, the classifying space of the left homotopy fiber $\rho' \mathord{\downarrow} x$ is contractible 
for the natural functor $\rho' : \gr(\check{C}(\D^{\op})) \to C^{\op}$, and any object $x$ of $C^{\op}$.
The left homotopy fiber $\rho' \mathord{\downarrow} x$ of $\rho'$ coincides with the right homotopy fiber $x \mathord{\downarrow} \rho$ of $\rho$ for any object $x \in \ob(C)=\ob(C^{\op})$.
Again, Quillen's theorem A implies the result.
\end{proof}

Thomason's homotopy colimit theorem \cite{Tho79} tells us that the \v{C}ech complex for a cover $\D$ of a small category
is homotopy equivalent to the homotopy colimit 
$\hocolim B\check{C}(\D)$ of the diagram $B\check{C}(\D) : \Delta^{\op} \to \mathbf{Top}$, in the category $\mathbf{Top}$ of spaces.
The diagram $B\check{C}(\D)$ is a Reedy cofibrant simplicial space \cite{Hir03}, with respect to the Str\o m model structure on $\mathbf{Top}$, since all the degeneracy maps are closed cofibrations.
The Bousfield-Kan map $\hocolim B\check{C}(\D) \to |B\check{C}(\D)|$ is a homotopy equivalence by Theorem 18.7.4 of \cite{Hir03}. Consequently, we have a homotopy equivalence
\[
|B\check{C}(\D)| \simeq BC,
\]
for an ideal cover or a filter cover $\D$ of a small category $C$.
Note that the collection $\{BD_{a}\}_{a \in A}$ is a cover consisting of subcomplexes of $BC$. 
In \cite{Bjo03}, Bj\"orner shows the above homotopy equivalence for a simplicial complex with a cover consisting of subcomplexes, where the connectivity of intersections is assumed.
We do not require such connectivity of intersections for the proof of Theorem \ref{rho}.

If the index set $A$ is equipped with a total order, then we can consider the ordered \v{C}ech nerve.

\begin{definition}[Ordered \v{C}ech nerve]
Let $\D=\{D_{a}\}_{a \in A}$ be a cover of a small category $C$, indexed by a totally ordered set $A$.
The {\em ordered \v{C}ech nerve} $\check{C}^{o}(\D)$ is a simplicial object in $\cat$, defined as 
\[
\check{C}^{o}(\D)_{n} = \coprod_{a_{0} \leq \ldots \leq a_{n} } D_{a_{0}a_{1} \ldots a_{n}}.
\]
The face and degeneracy maps are defined similarly to the ordinary \v{C}ech nerve.
\end{definition}

The ordered \v{C}ech nerve is smaller, and often easier to treat, than the ordinary \v{C}ech nerve.
There exists an inclusion $\check{C}^{o}(\D) \to \check{C}(\D)$ of simplicial objects in $\cat$. 
This induces a functor $I : \gr(\check{C}^{o}(\D)) \to \gr(\check{C}(\D))$
between their Grothendieck constructions.

\begin{proposition}\label{I}
Let $\D=\{D_{a}\}_{a \in A}$ be a cover of a small category $C$, indexed by a totally ordered set $A$.
The inclusion functor $I : \gr(\check{C}^{o}(\D)) \to \gr(\check{C}(\D))$ induces a homotopy equivalence between their classifying spaces. 
\end{proposition}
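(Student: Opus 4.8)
The plan is to compare the two classifying spaces after passing to geometric realizations, exactly as in the discussion preceding this proposition. By Thomason's theorem \cite{Tho79} together with the Bousfield--Kan map, and since both $B\check{C}(\D)$ and $B\check{C}^{o}(\D)$ are Reedy cofibrant (their degeneracies insert a repeated index and are thus inclusions of summands, hence closed cofibrations), we have $B\gr(\check{C}(\D)) \simeq |B\check{C}(\D)|$ and $B\gr(\check{C}^{o}(\D)) \simeq |B\check{C}^{o}(\D)|$, naturally in $I$. So it suffices to prove that $|B\check{C}^{o}(\D)| \to |B\check{C}(\D)|$ is a homotopy equivalence. I would \emph{not} attempt Quillen's theorem A for $I$ directly: for an object indexed by a non-monotone tuple, the simplices under it that land in the ordered nerve split into several components, so the relevant comma categories are disconnected, $I$ is not homotopy final, and the hypotheses of theorem A fail.

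Instead I would realize both sides as realizations of bisimplicial sets and compare them one internal degree at a time. Writing $N(-)$ for the nerve, let $X_{n,q}$ be the set of $q$-simplices of $N\bigl(\check{C}(\D)_{n}\bigr)$ and $X^{o}_{n,q}$ the ordered analogue, so that $|B\check{C}(\D)| = |X_{\bullet,\bullet}|$, likewise for the ordered version, and $I$ is realized by $X^{o} \hookrightarrow X$. The point is to fix $q$ and reindex by chains. A $q$-simplex $\xi$ of $NC$ is a chain in $D_{a_{0}\ldots a_{n}}$ precisely when every $a_{i}$ lies in $A_{\xi} = \{a \in A \mid \xi \text{ is a chain in } D_{a}\}$, so the fibre of $X_{n,q}$ over $\xi$ is $(A_{\xi})^{n+1}$, and in the $n$-direction the face and degeneracy maps omit and repeat indices. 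Thus $X_{\bullet,q}$ is the nerve of the codiscrete category $\widetilde{A_{\xi}}$ on $A_{\xi}$ (one morphism between any two objects), while $X^{o}_{\bullet,q}$ is the nerve of $A_{\xi}$ viewed as a totally ordered subset of $A$; that is, for each fixed $q$,
\[
X_{\bullet,q} \;\cong\; \coprod_{\xi} N\widetilde{A_{\xi}}, \qquad X^{o}_{\bullet,q} \;\cong\; \coprod_{\xi} N A_{\xi},
\]
the coproducts running over the $\xi$ with $A_{\xi} \neq \emptyset$, naturally in $q$.

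The comparison then becomes transparent in each internal degree. The codiscrete category on the nonempty set $A_{\xi}$ has contractible classifying space, and $A_{\xi}$, being totally ordered, is a filtered category, so its classifying space is contractible as well; the natural map between them is therefore a homotopy equivalence, and summing over $\xi$ shows that $|X^{o}_{\bullet,q}| \to |X_{\bullet,q}|$ is a homotopy equivalence for every $q$. This is exactly where the total ordering of $A$ enters: without it the ordered index nerve need not be contractible. Finally I would invoke the realization lemma: $X^{o} \to X$ induces a map of simplicial spaces $q \mapsto |X^{o}_{\bullet,q}| \to |X_{\bullet,q}|$, which I have just shown to be a levelwise homotopy equivalence of (Reedy cofibrant) simplicial spaces, so by the results of \cite{Hir03} it realizes to $|X^{o}| \simeq |X|$, i.e. $|B\check{C}^{o}(\D)| \simeq |B\check{C}(\D)|$.

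I expect the main obstacle to be bookkeeping rather than conceptual: checking that the reindexing by chains is natural in $q$ (that the internal faces and degeneracies send $A_{\xi}$ into $A_{\xi'}$ compatibly with both the codiscrete and the ordered nerve structures), and verifying the hypotheses of the realization lemma so that a levelwise equivalence of simplicial spaces does realize to an equivalence. The genuinely substantive input is the single observation that a totally ordered set has contractible nerve, which is what forces the ordered and full constructions to agree even though $I$ itself is not a homotopy final functor.
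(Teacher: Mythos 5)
Your proposal is correct, but it takes a genuinely different route from the paper's. The paper stays entirely inside $\cat$: it constructs an explicit sorting functor $J : \gr(\check{C}(\D)) \to \gr(\check{C}^{o}(\D))$ with $JI=\id$, and---because no natural transformation connects $IJ$ to the identity directly---an auxiliary functor $K$ sending $x_{a_{0}\ldots a_{n}}$ to $x_{a_{0}\ldots a_{n}a_{\sigma(0)}\ldots a_{\sigma(n)}}$ (the tuple concatenated with its sorted version), together with natural transformations $t : K \Rightarrow IJ$ and $s : K \Rightarrow \id$; since natural transformations become homotopies after applying $B$, this zig-zag shows that $BI$ and $BJ$ are mutually inverse homotopy equivalences. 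You instead pass to realizations via Thomason's theorem and the Bousfield--Kan map (the same reduction the paper performs in the discussion after Theorem \ref{rho}, extended to the ordered nerve), and then compare the two bisimplicial sets one internal degree at a time, decomposing $X_{\bullet,q}$ and $X^{o}_{\bullet,q}$ as $\coprod_{\xi} N\widetilde{A_{\xi}}$ and $\coprod_{\xi} NA_{\xi}$ over the underlying chains $\xi$ in $C$. That decomposition is valid: a chain lies in $D_{a_{0}\ldots a_{n}}$ exactly when it lies in every $D_{a_{i}}$, and the internal faces and degeneracies only enlarge $A_{\xi}$ (subcategories are closed under composition and contain identities), so the splitting is natural in $q$; the levelwise map is then a disjoint union, over the same index set, of maps between contractible spaces, and the realization lemma applies because a bisimplicial set realized in one direction is Reedy cofibrant as a simplicial space in the other. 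So your proof goes through, at the cost of heavier machinery (naturality of Thomason's equivalence, interchange of realization order, the Reedy realization lemma), and it buys a conceptual explanation: levelwise, the statement is the contractibility of the codiscrete category on $A_{\xi}$ against that of the total order on $A_{\xi}$, exactly as in the topological \v{C}ech comparison---indeed the paper's functor $K$ and its zig-zag are the combinatorial shadow of that contractibility. The paper's argument, by contrast, needs no model-categorical input and produces an explicit homotopy inverse. One small inaccuracy in your side remark about Quillen's theorem A: for a non-monotone tuple $b_{0}\ldots b_{m}$, the comma category $I \mathord{\downarrow} y_{b_{0}\ldots b_{m}}$ is not merely disconnected but empty, since an order-preserving reindexing of a monotone tuple is again monotone; it is the other comma category $y_{b_{0}\ldots b_{m}} \mathord{\downarrow} I$ that can be disconnected. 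Your conclusion that theorem A does not apply to $I$ stands.
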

\begin{proof}
For an object $x_{a_{0} \ldots a_{n}}$ of $\gr(\check{C}(\D))$, there exists a permutation $\sigma \in \Sigma_{n}$, such that $a_{\sigma(0)} \leq \ldots \leq a_{\sigma(n)}$ in $A$.
In the case that $a_{i}=a_{j}$ for some $i<j$, we always choose $\sigma(i)<\sigma(j)$.
This yields an inverse functor $J : \gr(\check{C}(\D)) \to \gr(\check{C}^{o}(\D))$, sending $x_{a_{0} \ldots a_{n}}$ to $x_{a_{\sigma(0)} \ldots a_{\sigma(n)}}$ on objects.
For a morphism $(\varphi,f) : x_{a_{0} \ldots a_{n}} \to y_{b_{0} \ldots b_{m}}$ of $\gr(\check{C}(\D))$, 
let $\sigma \in \Sigma_{n}$ and $\tau \in \Sigma_{m}$ be permutations for the canonical reordering of $a_{0} \ldots a_{n}$ and $b_{0} \ldots b_{m}$, respectively.
For simplicity, denote $\alpha_{i}=a_{\sigma(i)}$ and $\beta_{j}=b_{\tau(j)}$, for each $i$ and $j$.
For every $j \in \{0,\cdots,m\}$, we have
\[
\beta_{j} = b_{\tau(j)}=a_{\varphi \tau (j)} = \alpha_{\sigma^{-1} \varphi \tau(j)}.
\]
Consider the composition of maps $J(\varphi)=\sigma^{-1} \varphi \tau : [m] \to [n]$. We can describe
\[
b_{0} \ldots b_{m} = \overbrace{a_{0} \ldots a_{0}}^{\varphi^{-1}(0)} \overbrace{a_{1} \ldots a_{1}}^{\varphi^{-1}(1)} \ldots \overbrace{a_{n} \ldots a_{n}}^{\varphi^{-1}(n)}.
\]
The permutation $\tau$ acts on $b_{0} \ldots b_{m}$ as $\sigma$:
\[
b_{\tau(0)} \ldots b_{\tau(m)} = \overbrace{a_{\sigma(0)} \ldots a_{\sigma(0)}}^{\varphi^{-1}(\sigma(0))} \overbrace{a_{\sigma(1)} \ldots a_{\sigma(1)}}^{\varphi^{-1}(\sigma(1))} \ldots \overbrace{a_{\sigma(n)} \ldots a_{\sigma(n)}}^{\varphi^{-1}(\sigma(n))}.
\]
It follows that $J(\varphi)$ preserves order.
The functor $J$ sends $(\varphi,f)$ to $(J(\varphi),f)$ on morphisms.

Trivially, we have $JI = \mathrm{id}$ on $\gr(\check{C}^{o}(\D))$. Let us examine the composition $IJ$.
A functor $K : \gr(\check{C}(\D)) \to \gr(\check{C}(\D))$ is defined by $K(x_{a_{0} \ldots a_{n}}) = x_{a_{0} \ldots a_{n} a_{\sigma(0)} \cdots a_{\sigma(n)}}$ on objects.
For a morphism $(\varphi,f)$ of $\gr(\check{C}^{o}(\D))$, define $K(\varphi,f) = (\varphi*J(\varphi), f)$, 
where $\varphi*J(\varphi) :[2m+1] \to [2n+1]$ is given as 
\[
\begin{cases}
\varphi*J(\varphi)(i)= \varphi(i) & \mathrm{if}\ 0 \leq i \leq m,\\
\varphi*J(\varphi)(i)= J(\varphi)(i-m-1) &\mathrm{if}\ m+1 \leq i \leq 2m+1.
\end{cases}
\]
There exist natural transformations $t : K \Rightarrow IJ$ with $t(x_{a_{0} \ldots a_{n}}) = (d_{0}^{n},\id_{x})$, and 
$s : K \Rightarrow \mathrm{id}$ with $s(x_{a_{0} \ldots a_{n}}) = (d_{n+1} \ldots d_{2n+1},\id_{x})$, for an object $x_{a_{0} \ldots a_{n}}$ of $\gr(\check{C}(\D))$. 
Therefore, the induced maps from $I$ and $J$ on classifying spaces are homotopy inverses to each other.
\end{proof}

For a cover $\D=\{D_{a}\}_{a \in A}$ of a small category $C$, indexed by a totally ordered set $A$, 
the natural functor $\rho^{o} : \gr(\check{C}^{o}(\D)) \to C$ is given as the composition $\rho^{o}=\rho I$. 
Theorem \ref{rho} and Proposition \ref{I} imply the following.

\begin{corollary}\label{rho^o}
Let $\D=\{D_{a}\}_{a \in A}$ be an ideal cover or a filter of a small category $C$, indexed by a totally ordered set $A$.
The natural functor $\rho^{o} : \gr(\check{C}^{o}(\D)) \to C$ induces a homotopy equivalence between their classifying spaces.
\end{corollary}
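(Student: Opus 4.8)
The plan is to exploit the factorization already recorded just above the statement, namely $\rho^{o} = \rho \circ I$, and to deduce the corollary by composing the two homotopy equivalences supplied by Theorem \ref{rho} and Proposition \ref{I}. Since both ingredients are established earlier, no new homotopy-theoretic input is required; the entire content is to assemble them correctly, so I expect this to be a short argument.

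First I would verify the identity $\rho^{o} = \rho \circ I$ strictly at the level of functors, not merely up to homotopy. The inclusion $I : \gr(\check{C}^{o}(\D)) \to \gr(\check{C}(\D))$ carries an object $x_{a_{0} \ldots a_{n}}$ with $a_{0} \leq \ldots \leq a_{n}$ to the same object viewed inside the full Čech nerve, and $\rho$ then discards the indices, returning $x$; this is exactly the prescription defining $\rho^{o}$. The corresponding check on a morphism $(\varphi, f)$ is routine, since $I$ leaves the pair unchanged and $\rho$ forgets $\varphi$, yielding $f$, in agreement with $\rho^{o}(\varphi, f)$. Because the nerve and geometric realization are functorial, this equality of functors upgrades to the equality $B\rho^{o} = B\rho \circ BI$ of maps on classifying spaces.

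With the strict commutativity in hand, I would then invoke the two cited results directly. Proposition \ref{I} shows that $BI$ is a homotopy equivalence for any cover indexed by a totally ordered set, with no further hypothesis on the cover, while Theorem \ref{rho} shows that $B\rho$ is a homotopy equivalence precisely when $\D$ is an ideal cover or a filter cover, which is the standing assumption of the corollary. A composite of homotopy equivalences is again a homotopy equivalence, so $B\rho^{o} = B\rho \circ BI$ is one as well. The only point demanding any attention—hardly an obstacle—is the on-the-nose commutativity of the first step, which is what licenses applying $B$ to a single equality of functors rather than to a square commuting only up to natural isomorphism; once that is confirmed, the conclusion is immediate.
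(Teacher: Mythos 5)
Your proposal is correct and is exactly the paper's argument: the paper defines $\rho^{o}$ as the composite $\rho I$ and deduces the corollary by combining Theorem \ref{rho} and Proposition \ref{I}, just as you do. Your extra care in checking the strict equality $\rho^{o}=\rho I$ on objects and morphisms is harmless but unnecessary, since in the paper this equality is the definition of $\rho^{o}$ rather than something to be proved.
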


Note that the Grothendieck construction of the ordered \v{C}ech nerve is independent of the order on the index set.

\begin{proposition}\label{independent_order}
Let $\D=\{D_{a}\}_{A}$ be a cover of a small category $C$, and let the index set $A$ be equipped with two total order $\leq_{1}$ and $\leq_{2}$.
Denote the ordered \v{C}ech nerve induced from $\leq_{i}$ by $\check{C}^{o}(\D)_{i}$ for $i=1,2$, respectively.
Then, the Grothendieck constructions $\gr(\check{C}^{o}(\D)_{1})$ and $\gr(\check{C}^{o}(\D)_{2})$ are isomorphic to each other as categories.
\end{proposition}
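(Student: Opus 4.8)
The plan is to build the desired isomorphism directly out of the functors already constructed in Proposition \ref{I}, so that functoriality comes for free and only the verification that two functors are mutually inverse remains. First note that the full (unordered) \v{C}ech nerve, and hence $\gr(\check{C}(\D))$, does not involve the order on $A$ at all, so it serves as a common category relating the two ordered constructions. Write $I_{i} : \gr(\check{C}^{o}(\D)_{i}) \to \gr(\check{C}(\D))$ and $J_{i} : \gr(\check{C}(\D)) \to \gr(\check{C}^{o}(\D)_{i})$ for the inclusion and its retraction associated to the order $\leq_{i}$ as in Proposition \ref{I}; recall that $J_{i}$ sends $x_{a_{0} \ldots a_{n}}$ to $x_{a_{\sigma(0)} \ldots a_{\sigma(n)}}$, where $\sigma$ is the permutation sorting the tuple into $\leq_{i}$-nondecreasing order (ties broken stably by the original index), and acts on a morphism $(\varphi,f)$ by $(\sigma^{-1}\varphi\tau, f)$ with $\sigma,\tau$ the sorting permutations of source and target. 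I would then set
\[
\Phi = J_{2} \circ I_{1} : \gr(\check{C}^{o}(\D)_{1}) \ra \gr(\check{C}^{o}(\D)_{2}), \qquad \Phi' = J_{1} \circ I_{2} : \gr(\check{C}^{o}(\D)_{2}) \ra \gr(\check{C}^{o}(\D)_{1}),
\]
and prove $\Phi' \circ \Phi = \id$ and $\Phi \circ \Phi' = \id$.

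Unpacking $\Phi$, an object $x_{a_{0} \ldots a_{n}}$ with $a_{0} \leq_{1} \cdots \leq_{1} a_{n}$ is included unchanged by $I_{1}$ and resorted by $J_{2}$ into its $\leq_{2}$-nondecreasing form, while a morphism $(\varphi,f)$ is carried to $(\sigma^{-1}\varphi\tau, f)$. Since the $I_{i}$ and $J_{i}$ are functors, $\Phi$ and $\Phi'$ are automatically functors, so the remaining task is purely combinatorial: to check that sorting by $\leq_{2}$ and then back by $\leq_{1}$ returns the original datum on the nose. On objects this is immediate, because the underlying multiset of indices is unchanged and a $\leq_{1}$-sorted tuple is uniquely determined by its multiset. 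On morphisms it reduces to showing that the permutation $\sigma'$ produced when $J_{1}$ is applied to the resorted tuple equals $\sigma^{-1}$ (and likewise $\tau' = \tau^{-1}$ on the target); for then the composite permutation is $(\sigma')^{-1}\sigma^{-1}\varphi\tau\tau' = \varphi$, and the morphism component $f$, which is carried along verbatim by each of $I_{1}, J_{2}, I_{2}, J_{1}$, is recovered unchanged.

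The only real point requiring care is the identity $\sigma' = \sigma^{-1}$ in the presence of repeated indices, where a sorting permutation is not pinned down by the order alone. Here I would lean on the stable tie-breaking convention fixed in Proposition \ref{I}. Concretely, label the occurrences of each index by $1, \ldots, \mu$ in their order of appearance in the $\leq_{1}$-sorted tuple; in that tuple equal indices are consecutive, so these copy-labels increase left to right within each block. A stable $\leq_{2}$-sort permutes the tokens so that equal-index tokens keep their relative order, hence their copy-labels still increase within each block of $\alpha = x_{a_{\sigma(0)} \ldots a_{\sigma(n)}}$; a subsequent stable $\leq_{1}$-sort then returns each token to exactly its original slot, which is precisely the statement $\sigma\sigma' = \id$. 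I expect this bookkeeping with ties to be the main obstacle, but tracking the copy-labels as above settles it cleanly. By the symmetric argument $\Phi \circ \Phi' = \id$, so $\Phi$ is an isomorphism of categories and $\gr(\check{C}^{o}(\D))$ is independent of the chosen total order.
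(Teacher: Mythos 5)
Your proof is correct and follows essentially the same route as the paper: the paper's functor $F$ (resorting each tuple by the other order, ``similarly to the proof of Proposition \ref{I}'') is exactly your composite $J_{2}\circ I_{1}$, and its inverse $G$ is your $J_{1}\circ I_{2}$. The only difference is presentational: by factoring through $\gr(\check{C}(\D))$ you get functoriality for free, and your stable-sort bookkeeping with copy-labels spells out the ``straightforward calculation'' that the paper leaves to the reader.
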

\begin{proof}
Similarly to the proof of Proposition \ref{I}, we obtain a permutation $\sigma$ for an object $x_{a_{0} \ldots a_{n}}$ of $\gr(\check{C}^{o}(\D)_{1})$, such that
$x_{a_{\sigma(0)} \ldots a_{\sigma(n)}}$ belongs to $\gr(\check{C}^{o}(\D)_{2})$. 
This yields a functor $F : \gr(\check{C}^{o}(\D)_{1}) \to \gr(\check{C}^{o}(\D)_{2})$.
We also obtain an inverse functor $G: \gr(\check{C}^{o}(\D)_{2}) \to \gr(\check{C}^{o}(\D)_{1})$ by reordering the indexes.
A straightforward calculation shows that $FG=\mathrm{id}$ and $GF=\mathrm{id}$.
\end{proof}

Next, we define the reduced \v{C}ech nerve, smaller than the ordered \v{C}ech nerve.

\begin{definition}[Reduced \v{C}ech nerve]
Let $\Delta_{inj}$ denote the subcategory of $\Delta$, consisting of the same objects as $\Delta$ and injective order preserving maps.
Let $\D=\{D_{a}\}_{a \in A}$ be a cover of a small category $C$, indexed by a totally ordered set $A$. 
The {\em reduced \v{C}ech nerve} of $\D$ is a functor $\check{C}^{o}(\widetilde{\D}) : \Delta_{inj} \to \cat$, defined by
\[
\check{C}^{o}(\widetilde{\D})_{n} = \coprod_{a_{0} < \ldots < a_{n} } D_{a_{0}a_{1} \ldots a_{n}}.
\]
\end{definition}

This is the diagram in $\cat$ where the degenerate parts of the ordered \v{C}ech nerve have been removed.
It has only face maps omitting indices. 
Note that if $A$ is finite, then $\check{C}^{o}(\widetilde{\D})_{n}$ is the empty category for $n$ greater than the cardinality $A^{\sharp}$ of $A$.

The inclusion $\Delta_{inj} \to \Delta$ induces a functor $L : \gr(\check{C}^{o}(\widetilde{\D})) \to \gr(\check{C}^{o}(\D))$ on Grothendieck constructions.

\begin{lemma}\label{unique}
Let $\D=\{D_{a}\}_{a \in A}$ be a cover of a small category.
For two morphisms $(\varphi,f), (\chi,g): x_{a_{0} \ldots a_{n}} \to y_{b_{0} \ldots b_{m}}$ of $\gr(\check{C}(\D))$, if the indices $a_{0}, \ldots, a_{n}$ are distinguished, then $\varphi = \chi$.
\end{lemma}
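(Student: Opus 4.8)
The plan is to unwind the definition of a morphism in the Grothendieck construction $\gr(\check{C}(\D))$ and observe that its order-preserving component is completely pinned down by the index data as soon as the source indices are distinct. Recall that a morphism $(\varphi,f) : x_{a_{0} \ldots a_{n}} \to y_{b_{0} \ldots b_{m}}$ consists of an order-preserving map $\varphi : [m] \to [n]$ subject to the matching constraint $b_{j}=a_{\varphi(j)}$ for every $j \in \{0,\ldots,m\}$, together with a morphism $f$ of $D_{b_{0} \ldots b_{m}}$. The two given morphisms share the same source object $x_{a_{0} \ldots a_{n}}$ and target object $y_{b_{0} \ldots b_{m}}$, so $\varphi$ and $\chi$ are order-preserving maps $[m] \to [n]$ obeying the same constraint against the same index strings.

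First I would record the consequence of this shared constraint: for each $j$ we have $a_{\varphi(j)} = b_{j} = a_{\chi(j)}$, so that $a_{\varphi(j)} = a_{\chi(j)}$ for all $j \in \{0,\ldots,m\}$. The key step is then to invoke the hypothesis that the indices $a_{0}, \ldots, a_{n}$ are distinct, which is exactly the statement that the assignment $i \mapsto a_{i}$ is an injection $[n] \to A$. Under this injectivity the equality $a_{\varphi(j)} = a_{\chi(j)}$ forces $\varphi(j) = \chi(j)$ for each $j$, and hence $\varphi = \chi$.

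I expect essentially no obstacle; the content is pure bookkeeping about the simplicial component of a morphism in the Grothendieck construction. The only point worth emphasizing is that distinctness of the source indices is precisely what promotes the index-matching condition $b_{j} = a_{\varphi(j)}$ from a constraint that could a priori admit several solutions into one determining $\varphi$ uniquely. Note also that the morphism parts $f$ and $g$ play no role: the lemma claims nothing about them, only that the combinatorial components $\varphi$ and $\chi$ agree, which is exactly what injectivity of $i \mapsto a_{i}$ delivers.
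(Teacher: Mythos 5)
Your proposal is correct and follows exactly the paper's argument: from $a_{\varphi(j)} = b_{j} = a_{\chi(j)}$ and the distinctness of $a_{0},\ldots,a_{n}$ (i.e.\ injectivity of $i \mapsto a_{i}$), conclude $\varphi(j)=\chi(j)$ for all $j$. No differences worth noting beyond your added remark that $f$ and $g$ play no role, which is harmless and true.
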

\begin{proof}
For any $j \in \{0,\ldots,m\}$, the equality $a_{\varphi(j)}=b_{j}=a_{\chi(j)}$ holds.
Since the indices $a_{0},\ldots,a_{n}$ are distinguished, $\varphi(j)$ and $\chi(j)$ must be equal to each other.
\end{proof}

\begin{proposition}\label{ordered_adjoint}
Let $\D=\{D_{a}\}_{a \in A}$ be a cover of a small category $C$, indexed by a totally ordered set $A$. 
The inclusion functor $L : \gr(\check{C}^{o}(\widetilde{\D})) \to \gr(\check{C}^{o}(\D))$ has a right adjoint functor.
\end{proposition}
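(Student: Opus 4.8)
The plan is to construct the right adjoint $R$ explicitly by \emph{deduplicating indices}, and then to verify the adjunction through the universal property of its counit. The governing principle throughout is Lemma \ref{unique}: since every object of $\gr(\check{C}^{o}(\widetilde{\D}))$ carries a strictly increasing, hence repetition-free, index string, each order preserving map occurring in a morphism out of such an object is uniquely pinned down by its target index string, so all apparent choices disappear. Before anything else I would record a structural observation about $L$: if $x_{a_{0} \ldots a_{n}}$ and $y_{b_{0} \ldots b_{m}}$ both have strictly increasing indices, then any morphism $(\varphi,f) : x_{a_{0} \ldots a_{n}} \to y_{b_{0} \ldots b_{m}}$ of $\gr(\check{C}^{o}(\D))$ has $\varphi$ automatically injective, because $b_{j}=a_{\varphi(j)}$ and the $a_{i}$ are distinct. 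Thus $L$ is full and faithful, with image the full subcategory of $\gr(\check{C}^{o}(\D))$ spanned by the objects with strictly increasing indices, and producing a right adjoint amounts to exhibiting this subcategory as coreflective.

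Next I would define $R$ on objects. For $y_{b_{0} \ldots b_{m}}$ with $b_{0} \leq \ldots \leq b_{m}$, let $c_{0} < \ldots < c_{k}$ enumerate the distinct values among $b_{0}, \ldots, b_{m}$, and set $R(y_{b_{0} \ldots b_{m}}) = y_{c_{0} \ldots c_{k}}$. This is well defined because $D_{b_{0} \ldots b_{m}} = D_{c_{0} \ldots c_{k}}$, the intersection depending only on the underlying set of indices, so $y$ is a legitimate object over $c_{0} \ldots c_{k}$. As counit I would take $\epsilon_{y} = (\delta,\id_{y}) : y_{c_{0} \ldots c_{k}} \to y_{b_{0} \ldots b_{m}}$, where $\delta : [m] \to [k]$ is the unique (necessarily surjective) order preserving map with $b_{j}=c_{\delta(j)}$.

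The main work is to verify that $\epsilon_{y}$ is universal from $L$ to $y$: for every $x_{a_{0} \ldots a_{n}}$ with distinct indices and every morphism $h=(\varphi,f) : x_{a_{0} \ldots a_{n}} \to y_{b_{0} \ldots b_{m}}$, there is a unique $\bar{h} : x_{a_{0} \ldots a_{n}} \to y_{c_{0} \ldots c_{k}}$ of $\gr(\check{C}^{o}(\widetilde{\D}))$ with $\epsilon_{y} \circ L(\bar{h}) = h$. The condition $b_{j}=a_{\varphi(j)}$ forces $\{c_{0}, \ldots, c_{k}\} = \{b_{0}, \ldots, b_{m}\} \subseteq \{a_{0}, \ldots, a_{n}\}$, so by Lemma \ref{unique} there is a unique injective order preserving $\psi : [k] \to [n]$ with $c_{i}=a_{\psi(i)}$; I set $\bar{h}=(\psi,f)$, using $D_{c_{0} \ldots c_{k}} = D_{b_{0} \ldots b_{m}}$. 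Unwinding the composition law of the Grothendieck construction, the identity $\epsilon_{y} \circ L(\bar{h}) = h$ reduces to the single index equation $\psi \circ \delta = \varphi$, which holds since $a_{\psi(\delta(j))}=c_{\delta(j)}=b_{j}=a_{\varphi(j)}$ and the $a_{i}$ are distinct, while the morphism components agree because $\delta_{*}$ merely relabels $f$ inside the common category $D_{b_{0} \ldots b_{m}}$ and $\id_{y}$ absorbs it. Uniqueness of $\bar{h}$ is again Lemma \ref{unique}. Assembling a universal arrow at each object of $\gr(\check{C}^{o}(\D))$ yields the right adjoint $R$.

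I expect the only genuine obstacle to lie in the composition bookkeeping of the final step: one must track the base maps in $\Delta^{\op}$ together with the relabelling functors $\varphi_{*}$ and $\delta_{*}$ carefully enough to confirm that $\epsilon_{y} \circ L(\bar{h})$ really returns $(\varphi,f)$ rather than some reindexed variant. Once the bare identity $\psi\circ\delta=\varphi$ is isolated, the rest is forced, and no connectivity or further hypotheses on the cover are needed.
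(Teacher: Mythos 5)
Your proposal is correct and is essentially the paper's own argument: both construct the same deduplication assignment $R(y_{b_{0} \ldots b_{m}})=y_{c_{0} \ldots c_{k}}$ (using that $D_{b_{0} \ldots b_{m}}=D_{c_{0} \ldots c_{k}}$) and both invoke Lemma \ref{unique} to make the order-preserving components unique. The only difference is packaging: the paper also defines $R$ on morphisms and exhibits the natural bijection $\gr(\check{C}^{o}(\widetilde{\D}))(x_{a_{0} \ldots a_{n}}, R(y_{b_{0} \ldots b_{m}})) \cong \gr(\check{C}^{o}(\D))(L(x_{a_{0} \ldots a_{n}}), y_{b_{0} \ldots b_{m}})$ directly, whereas you verify the universal property of the counit $(\delta,\id_{y})$, which then yields functoriality of $R$ and the adjunction automatically.
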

\begin{proof}
For an arbitrary object $x_{a_{0} \ldots a_{n}}$ of $\gr(\check{C}^{o}(\D))$, 
we obtain an object $x_{\alpha_{0} \ldots \alpha_{n'}}$ of $\gr(\check{C}^{o}(\widetilde{\D}))$ and 
a surjective order preserving map $\psi : [n] \to [n']$, such that $\alpha_{j}=a_{\psi(j)}$, by removing duplicate indices. 
This determines the reduction functor $R : \gr(\check{C}^{o}(\D)) \to \gr(\check{C}^{o}(\widetilde{\D}))$, sending $x_{a_{0} \ldots a_{n}}$ to $x_{\alpha_{0} \ldots \alpha_{n'}}$ on objects.
For a morphism $(\varphi,f) : x_{a_{0} \ldots a_{n}} \to y_{b_{0} \ldots b_{m}}$ of $\gr(\check{C}^{o}(\widetilde{\D}))$, the order preserving map $\varphi : [m] \to [n]$ induces 
$\varphi' : [m'] \to [n']$, making the following diagram commutative:
\[
\xymatrix{
[m] \ar[r]^{\varphi} \ar[d]_{\psi} & [n] \ar[d]^{\psi} \\
[m'] \ar[r]_{\varphi'} & [n'].
}
\]
The reduction functor $R$ sends $(\varphi,f)$ to $(\varphi',f)$ on morphisms.
We take objects $x_{a_{0} \ldots a_{n}}$ of $\gr(\check{C}^{o}(\widetilde{\D}))$ and $y_{b_{0} \ldots b_{m}}$ of $\gr(\check{C}^{o}(\D))$, respectively.
Since $a_{0}, \ldots, a_{n}$ are distinguished, we have that
\begin{equation}
\begin{split}
\gr(\check{C}^{o}(\widetilde{\D}))(x_{a_{0} \ldots a_{n}}, R(y_{b_{0} \ldots b_{m}})) &=  \gr(\check{C}^{o}(\widetilde{\D}))(x_{a_{0} \ldots a_{n}}, y_{\beta_{0} \ldots \beta_{m'}})) \\ \notag
& \cong D_{\beta_{0} \ldots \beta_{m'}}(x,y) \\ 
&= D_{b_{0} \ldots b_{m}}(x,y) \\
&\cong \gr(\check{C}^{o}(\D))(L(x_{a_{0} \ldots a_{n}}), y_{b_{0} \ldots b_{m}}),
\end{split}
\end{equation}
by Lemma \ref{unique}.
Therefore, we conclude that the reduction functor $R$ is right adjoint to $L$.
\end{proof}

The reduced \v{C}ech nerve is also equipped with the natural functor $\tilde{\rho}=\rho^{o}L : \check{C}^{o}(\widetilde{\D}) \to C$, the same as the ordered \v{C}ech nerve and the ordinary \v{C}ech nerve.
Form Proposition \ref{ordered_adjoint} and Corollary \ref{rho^o}, we can deduce the following.

\begin{corollary}
Let $\D=\{D_{a}\}_{a \in A}$ be an ideal cover or a filter cover of a small category $C$, indexed by a totally ordered set $A$. 
The natural functor $\tilde{\rho} : \check{C}^{o}(\widetilde{\D}) \to C$ induces a homotopy equivalence between their classifying spaces.
\end{corollary}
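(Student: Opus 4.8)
The plan is to exploit the factorization $\tilde{\rho} = \rho^{o} L$ recorded just before the statement, and to argue that each of the two functors induces a homotopy equivalence on classifying spaces, so that their composite does as well. By Corollary \ref{rho^o}, the functor $\rho^{o} : \gr(\check{C}^{o}(\D)) \to C$ already induces a homotopy equivalence $B\rho^{o}$, so the entire burden of the proof falls on the inclusion $L : \gr(\check{C}^{o}(\widetilde{\D})) \to \gr(\check{C}^{o}(\D))$.

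For $L$, the key input is Proposition \ref{ordered_adjoint}, which furnishes a right adjoint $R$ to $L$. I would then invoke the standard principle that a pair of adjoint functors induces mutually homotopy-inverse maps on classifying spaces. Concretely, the reason is that a natural transformation between two functors $F, G : \mathcal{C} \to \mathcal{D}$ determines a functor $\mathcal{C} \times [1] \to \mathcal{D}$, and since $B(\mathcal{C} \times [1]) \cong B\mathcal{C} \times [0,1]$, this produces a homotopy between $BF$ and $BG$. Applying this to the unit $\eta : \id \Rightarrow RL$ and the counit $\varepsilon : LR \Rightarrow \id$ of the adjunction yields homotopies $BR \circ BL \simeq \id$ and $BL \circ BR \simeq \id$, so $BL$ is a homotopy equivalence with homotopy inverse $BR$.

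Combining the two observations, $B\tilde{\rho} = B\rho^{o} \circ BL$ is a composite of homotopy equivalences and is therefore itself a homotopy equivalence, which is the assertion of the corollary. (Here I read the domain of $\tilde{\rho}$ as $\gr(\check{C}^{o}(\widetilde{\D}))$, which is what the definition $\tilde{\rho} = \rho^{o} L$ requires.)

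I do not anticipate a genuine obstacle here: once the adjunction of Proposition \ref{ordered_adjoint} and the equivalence of Corollary \ref{rho^o} are in hand, the argument is purely formal. The only point requiring care will be the citation of the fact that adjoint functors induce homotopy equivalences on classifying spaces; this is classical, following from the homotopy-invariance of $B$ under natural transformations, so I would simply reference it rather than reprove it.
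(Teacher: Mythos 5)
Your proposal is correct and is essentially identical to the paper's argument: the paper deduces the corollary from exactly the same two ingredients, namely the factorization $\tilde{\rho}=\rho^{o}L$ together with Corollary \ref{rho^o} for $\rho^{o}$ and Proposition \ref{ordered_adjoint} for $L$, the adjunction $L \dashv R$ giving a homotopy equivalence $BL$ by the classical fact that natural transformations induce homotopies on classifying spaces. Your parenthetical reading of the domain as $\gr(\check{C}^{o}(\widetilde{\D}))$ is also the intended one; the statement's domain is a typographical slip in the paper.
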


\begin{definition}
A cover $\D=\{D_{a}\}_{a \in A}$ of a small category $C$ is called {\em locally finite} when
\[
\{a \in A \mid x \in \ob(D_{a})\}^{\sharp}<\infty,
\]
for any object $x$ of $C$. 
\end{definition}

\begin{proposition}\label{finite_adjoint}
Let $\D=\{D_{a}\}_{a \in A}$ be a locally finite ideal cover of a small category $C$, indexed by a totally ordered set $A$.
The natural functor $\tilde{\rho} : \gr (\check{C}^{o}(\widetilde{\D})) \to C$ has a left adjoint functor.
\end{proposition}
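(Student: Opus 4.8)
The plan is to construct an explicit left adjoint $\Lambda : C \to \gr(\check{C}^{o}(\widetilde{\D}))$ to $\tilde{\rho}$ by assigning to each object the \emph{maximal} admissible index tuple. For an object $x$ of $C$, set $A_{x}=\{a \in A \mid x \in \ob(D_{a})\}$. Since $\D$ is a cover, $A_{x}$ is nonempty, and local finiteness guarantees $A_{x}^{\sharp}<\infty$; because $A$ is totally ordered we may write $A_{x}=\{a_{0}<\cdots<a_{n}\}$. As each $D_{a_{i}}$ is a full subcategory containing $x$, the object $x$ lies in the intersection $D_{a_{0}\ldots a_{n}}$, so $x_{a_{0}\ldots a_{n}}$ is a genuine object of $\gr(\check{C}^{o}(\widetilde{\D}))$. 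Define $\Lambda(x)=x_{a_{0}\ldots a_{n}}$, and take the unit $\eta_{x}=\id_{x} : x \to \tilde{\rho}(\Lambda(x))=x$. It then suffices to verify the universal property of $\eta_{x}$, which will simultaneously force the definition of $\Lambda$ on morphisms and establish the adjunction $\Lambda \dashv \tilde{\rho}$.

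Next I would check the universal property. Fix an object $z_{c_{0}\ldots c_{p}}$ of $\gr(\check{C}^{o}(\widetilde{\D}))$ (so $c_{0}<\cdots<c_{p}$ and $z \in \ob(D_{c_{0}\ldots c_{p}})$) together with a morphism $g : x \to z$ of $C$; I must produce a unique morphism $(\psi,h) : x_{a_{0}\ldots a_{n}} \to z_{c_{0}\ldots c_{p}}$ with $\tilde{\rho}(\psi,h)=h=g$. The key observation is that the ideal hypothesis controls the indices: for each $j$ we have $z \in \ob(D_{c_{j}})$ and $C(x,z)\ni g \neq \emptyset$, so since $D_{c_{j}}$ is an ideal, $x \in \ob(D_{c_{j}})$, i.e. $c_{j}\in A_{x}=\{a_{0},\ldots,a_{n}\}$. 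Hence there is a (unique, by Lemma \ref{unique}) injective order preserving map $\psi : [p] \to [n]$ with $c_{j}=a_{\psi(j)}$. Moreover $x$ lies in $\ob(D_{c_{0}\ldots c_{p}})$ as well, and since each $D_{c_{j}}$ is full the hom-set satisfies $D_{c_{0}\ldots c_{p}}(x,z)=C(x,z)$, so $g$ is legitimately a morphism of $D_{c_{0}\ldots c_{p}}$ and we may take $h=g$. This produces the required morphism, and its uniqueness follows because $\psi$ is forced and $h$ is pinned down by $\tilde{\rho}(\psi,h)=g$.

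Finally I would record that these bijections
\[
\gr(\check{C}^{o}(\widetilde{\D}))(\Lambda(x), z_{c_{0}\ldots c_{p}}) \;\cong\; C(x, \tilde{\rho}(z_{c_{0}\ldots c_{p}}))
\]
are natural in both variables: naturality in $z_{c_{0}\ldots c_{p}}$ is immediate from the construction, and naturality in $x$ determines $\Lambda$ on morphisms (concretely, a morphism $g:x\to x'$ induces $A_{x'}\subseteq A_{x}$ by the same ideal argument, and $\Lambda(g)$ is the evident pair $(\psi, g)$). This establishes $\Lambda \dashv \tilde{\rho}$. The step I expect to be the genuine content, rather than bookkeeping, is the index-containment argument of the second paragraph: local finiteness is what makes the maximal tuple $A_{x}$ a legal (finite) index for an object of the reduced \v{C}ech nerve, and the ideal property is precisely what guarantees that every index $c_{j}$ appearing on the target already occurs in $A_{x}$, so that the simplicial component $\psi$ exists and is unique. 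Both hypotheses enter here in an essential way, and the filter case is not claimed since the asymmetry of the adjunction breaks the usual opposite-category reduction.
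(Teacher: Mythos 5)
Your proposal is correct and matches the paper's own proof in substance: the paper defines the same left adjoint $\pi(x)=x_{a_{0}\ldots a_{n}}$ indexed by all ideals of $\D$ containing $x$, uses the ideal property to get the index containment $\{b_{0},\ldots,b_{m}\}\subseteq\{a_{0},\ldots,a_{n}\}$ for morphisms, and establishes the hom-set bijection $C(x,\tilde{\rho}(y_{b_{0}\ldots b_{m}}))\cong\gr(\check{C}^{o}(\widetilde{\D}))(\pi(x),y_{b_{0}\ldots b_{m}})$ via fullness and Lemma \ref{unique}. Your phrasing through the universal property of the unit is just an equivalent packaging of that same bijection, with the same essential ingredients entering at the same points.
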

\begin{proof}
A functor $\pi : C \to \gr(\check{C}^{o}(\widetilde{\D}))$ is defined by $\pi(x) = x_{a_{0} \ldots a_{n}}$ on objects, where $D_{a_{0}}, \ldots, D_{a_{n}}$ are whole distinguished ideals in $\D$ that contain $x$.
For a morphism $f : x \to y$ of $C$, the object $x$ belongs to an ideal $D$ if $y$ does.
When we describe $\pi(x)=x_{a_{0} \ldots a_{n}}$ and $\pi(y)=y_{b_{0} \ldots b_{m}}$, 
the inclusion relation $\{b_{0}, \ldots b_{m}\} \subset \{a_{0},\ldots, a_{m}\}$ holds.
This yields an injection $\varphi : [m] \to [n]$, such that $b_{j}=a_{\varphi(j)}$.
We define $\pi(f) = (\varphi,f) : \pi(x) \to \pi(y)$ on morphisms.

Take objects $x$ of $C$ and $y_{b_{0} \ldots b_{m}}$ of $\gr(\check{C}^{o}(\widetilde{\D}))$, respectively.
Then, by Lemma \ref{unique},
\[
C(x,\tilde{\rho}(y_{b_{0} \ldots b_{m}}))=C(x,y) =D_{b_{0} \ldots b_{m}}(x,y) \cong \gr(\check{C}^{o}(\widetilde{\D}))(\pi(x),y_{b_{0} \ldots b_{m}}).
\]
Therefore, we conclude that the functor $\pi$ is left adjoint to $\tilde{\rho}$.
\end{proof}

When $\D=\{D_{a}\}_{a \in A}$ is a filter cover of a small category $C$, the opposite cover $\D^{\op}=\{D_{a}^{\op}\}_{a \in A}$ is an ideal cover of $C^{\op}$.
The following corollary then follows immediately. 

\begin{corollary}
Let $\D=\{D_{a}\}_{a \in A}$ be a locally finite filter cover of a small category $C$, indexed by a totally ordered set $A$.
The natural functor $\gr (\check{C}^{o}(\widetilde{\D}^{\op})) \to C^{\op}$ has a left adjoint functor.
\end{corollary}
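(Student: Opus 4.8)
The plan is to obtain this as the formal dual of Proposition \ref{finite_adjoint}, so the entire argument consists of checking that passing to opposite categories transports the hypotheses correctly and then invoking that proposition. First I would confirm that $\D^{\op}=\{D_{a}^{\op}\}_{a\in A}$ is an ideal cover of $C^{\op}$. Because $C^{\op}(y,x)=C(x,y)$, the condition that $D_{a}^{\op}$ be an ideal of $C^{\op}$—namely that $y\in\ob(D_{a}^{\op})$ whenever $C^{\op}(y,x)\neq\emptyset$ for some $x\in\ob(D_{a}^{\op})$—is exactly the condition that $D_{a}$ be a filter of $C$. Since the objects and generating morphisms of $\bigcup_{a}D_{a}^{\op}$ are the opposites of those of $\bigcup_{a}D_{a}=C$, we also have $\bigcup_{a}D_{a}^{\op}=C^{\op}$, so $\D^{\op}$ is indeed an ideal cover.

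Next I would check that the remaining hypotheses survive. Local finiteness is immediate: $\ob(D_{a}^{\op})=\ob(D_{a})$ for each $a$, so $\{a\in A\mid x\in\ob(D_{a}^{\op})\}=\{a\in A\mid x\in\ob(D_{a})\}$ for every object $x$, and the total order on the common index set $A$ is unchanged. Moreover, intersection commutes with $(-)^{\op}$, giving $D_{a_{0}}^{\op}\cap\cdots\cap D_{a_{n}}^{\op}=(D_{a_{0}\ldots a_{n}})^{\op}$, so $\check{C}^{o}(\widetilde{\D}^{\op})$ is precisely the reduced \v{C}ech nerve attached to the ideal cover $\D^{\op}$ of $C^{\op}$. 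With all hypotheses verified, Proposition \ref{finite_adjoint} applied to the locally finite ideal cover $\D^{\op}$ of $C^{\op}$ yields directly that $\tilde{\rho}:\gr(\check{C}^{o}(\widetilde{\D}^{\op}))\to C^{\op}$ admits a left adjoint, which is the claim.

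I do not expect any substantive obstacle here; the only thing requiring care is bookkeeping, making sure the operation $(-)^{\op}$ is applied consistently to the cover, the ambient category, the pairwise intersections, and the index tuples, and that none of the constructions involved—union, intersection, the reduced \v{C}ech nerve, and the Grothendieck construction—interacts with $(-)^{\op}$ in an unexpected way. All of these are routine compatibilities, matching the dualization already used in the proof of Theorem \ref{rho}.
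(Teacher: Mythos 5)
Your proposal is correct and is exactly the paper's argument: the paper also deduces the corollary by observing that $\D^{\op}=\{D_{a}^{\op}\}_{a\in A}$ is an ideal cover of $C^{\op}$ (filters dualize to ideals) and then invoking Proposition \ref{finite_adjoint}, which it states "follows immediately." Your additional checks---that local finiteness, the total order, the intersections, and the reduced \v{C}ech nerve all transport across $(-)^{\op}$---are the routine bookkeeping the paper leaves implicit.
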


\section{The inclusion-exclusion principle for the Euler characteristics of finite categories}

In this section, we focus on the inclusion-exclusion principle for the Euler characteristic of a finite category.
The Euler characteristic of a finite category was introduced by Leinster \cite{Lei08}. This is a generalization of M\"{o}bious inversion for posets \cite{Rot64}.
Let us briefly review the definition. In this paper, we use the rational numbers $\mathbb{Q}$ as the value of Euler characteristics of finite categories.

\begin{definition}
Suppose that $C$ is a finite category that has finitely many objects and morphisms.
\begin{enumerate}
\item The {\em similarity matrix} of $C$ is the function 
$\zeta : \mathrm{ob}(C) \times \mathrm{ob}(C) \to \mathbb{Q}$, given by the cardinality of the set of morphisms $\zeta(a,b)=C(a,b)^{\sharp}$.
\item Let $u : \ob(C) \to \mathbb{Q}$ denote the column vector with $u(a)=1$, for any object $a$ of $C$.
A {\em weighting} on $C$ is a column vector $w:\mathrm{ob}(C)\to \mathbb{Q}$ such that $\zeta w=u$. 
Dually, a {\em coweighting} on $C$ is a row vector $v:\mathrm{ob}(C)\to \mathbb{Q}$, such that $v \zeta = u^{*}$, where $u^{*}$ is the transposed matrix of $u$.
\end{enumerate}
\end{definition}

Note that we have
\[
\sum_{i \in \mathrm{ob}(C)}w(i) = u^{*} w = v \zeta w= v u=\sum_{j \in \mathrm{ob}(C)} v(j),
\]
if both a weighting and a coweighting exist. 
Moreover, 
\[
\sum_{i \in  \mathrm{ob}(C)} w(i)= u^{*} w = v \zeta w = v \zeta w' = u^{*} w' = \sum_{i \in \mathrm{ob}(C)} w'(i),
\]
for two (co)weightings $w$ and $w'$ on $C$. This guarantees the following definition.

\begin{definition}
Let $C$ be a finite category. 
We say that $C$ \textit{has Euler characteristic} if it has 
both a weighting $w$, and a coweighting $v$, on $C$. 
Then, the \textit{Euler characteristic} of $C$ is defined by
\[
\chi(C) = \sum_{i \in \mathrm{ob}(C)}w(i) = \sum_{j \in \mathrm{ob}(C)}v(j).
\]
\end{definition}

It is well-known that the topological Euler characteristic $\chi_{T}$ has the following formula:
\[
\chi_{T}(A \cup B) = \chi_{T}(A)+\chi_{T}(B)-\chi_{T}(A \cap B),
\]
for subcomplexes $A$ and $B$ of a finite CW-complex $X$. 
Recall the example given in our introduction. The finite category $C$ is
\[
\xymatrix{
x \ar@/^/[r] \ar@/_/[r]& y \ar[r] & z,
}
\]
with the terminal object $z$, $D_{1}$ is the full subcategory with $\ob(D_{1})=\{x,y\}$, and $D_{2}$ is the full subcategory with $\ob(D_{2})=\{y,z\}$. 
The Euler characteristic $\chi(C)=\chi(D_{1} \cup D_{2})=1$, since $C$ has a terminal object. However, 
\[
\chi(D_{1}) + \chi(D_{2}) -\chi(D_{1} \cap D_{2}) = 0+1-1=0.
\]
The inclusion-exclusion principle does not hold in this case. In \cite{FLS11}, Fiore, Luck, and Sauer used the homotopy colimit of diagram in $\cat$, instead of the genuine colimit or union.

Now, we consider the following situation.
Let $C$ be a finite category, and let $\D=\{D_{a}\}_{a \in A}$ be a cover of $C$, indexed by a totally ordered finite set $A$.
We denote the full subcategory of $\Delta_{inj}$, consisting of $[n]$ for $0 \leq n \leq A^{\sharp}-1$ by $\Delta_{A^{\sharp}}$.
The reduced \v{C}ech nerve $\gr(\check{C}^{o}(\widetilde{\D}))$ can be regarded as a functor $\Delta_{A^{\sharp}} \to \cat$.
The category $\Delta_{A^{\sharp}}$ is a finite acyclic category, which never has circuit of morphisms.
Moreover, each $\check{C}^{o}(\widetilde{\D})_{n}$ is a finite coproduct of finite categories.
Leinster provided the product formula for the Euler characteristic of finite categories, with respect to Grothendieck construction in \cite{Lei08}.
By applying this to $\gr (\check{C}^{o}(\widetilde{\D}))$, we obtain the inclusion-exclusion principle for ideal (filter) covers of a finite category.

\begin{theorem}[Inclusion-exclusion principle]\label{inclusion-exclusion}
Let $\D=\{D_{a}\}_{a \in A}$ be an ideal cover or a filter cover of a finite category $C$, indexed by a finite set $A$. 
If each $D_{a_{0} \ldots a_{i}}$ and $C$ have Euler characteristics, then we have
\[
\chi(C) = \sum_{i=0}^{A^{\sharp}-1}\sum_{a_{0}< \ldots < a_{i}} (-1)^{i} \chi(D_{a_{0} \ldots a_{i}}).
\]
\end{theorem}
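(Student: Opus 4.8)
The plan is to compute $\chi\bigl(\gr(\check{C}^{o}(\widetilde{\D}))\bigr)$ in two independent ways and match them. Section~2 supplies the natural functor $\tilde{\rho}\colon \gr(\check{C}^{o}(\widetilde{\D})) \to C$ inducing a homotopy equivalence of classifying spaces (for a finite, hence locally finite, cover this also follows from the adjunction of Proposition~\ref{finite_adjoint}). Since $C$ is assumed to have an Euler characteristic, I would use Leinster's identification of $\chi$ with the Euler characteristic of the classifying space to conclude $\chi\bigl(\gr(\check{C}^{o}(\widetilde{\D}))\bigr) = \chi(C)$. Independently, I would evaluate the left-hand side through Leinster's product formula for the Euler characteristic of a Grothendieck construction, from which the right-hand side of the asserted identity will fall out.

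For the direct computation I would regard $\check{C}^{o}(\widetilde{\D})$ as a diagram indexed by the finite acyclic category $\Delta_{A^{\sharp}}$, whose fibre over $[i]$ is the finite coproduct $\coprod_{a_{0}<\cdots<a_{i}} D_{a_{0}\ldots a_{i}}$. Because $\chi$ is additive over finite coproducts, each fibre has Euler characteristic $\sum_{a_{0}<\cdots<a_{i}}\chi(D_{a_{0}\ldots a_{i}})$ under the hypothesis that every intersection $D_{a_{0}\ldots a_{i}}$ does, and the finite acyclic base itself has an Euler characteristic; this is exactly what makes Leinster's formula applicable, so $\gr(\check{C}^{o}(\widetilde{\D}))$ acquires an Euler characteristic equal to $\sum_{[i]} w([i])\,\chi(\check{C}^{o}(\widetilde{\D})_{i})$, where $w$ is the weighting of the indexing category. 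The crux of this paragraph is the claim $w([i]) = (-1)^{i}$: the similarity matrix records injective order-preserving maps, of which there are $\binom{n+1}{i+1}$ between $[i]$ and $[n]$, so verifying the weighting reduces to the identity $\sum_{i=0}^{n}(-1)^{i}\binom{n+1}{i+1} = 1$, immediate from the binomial theorem. Substituting $w([i]) = (-1)^{i}$ produces exactly $\sum_{i=0}^{A^{\sharp}-1}\sum_{a_{0}<\cdots<a_{i}}(-1)^{i}\chi(D_{a_{0}\ldots a_{i}})$.

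For a filter cover I would dualise: the opposite cover $\D^{\op}$ is an ideal cover of $C^{\op}$, and since $\chi(C^{\op})=\chi(C)$ and the intersections are unaffected up to taking opposites, the ideal case transfers verbatim. The step I expect to demand the most care is the comparison $\chi\bigl(\gr(\check{C}^{o}(\widetilde{\D}))\bigr) = \chi(C)$, because Leinster's Euler characteristic is not a homotopy invariant for arbitrary functors; I must lean on precisely the standing hypotheses—that both categories are finite and carry Euler characteristics—so that each $\chi$ agrees with the homotopy-invariant Euler characteristic of the classifying space and the equivalence induced by $\tilde{\rho}$ may legitimately be invoked. Once that identification is in place, combining the two evaluations of $\chi\bigl(\gr(\check{C}^{o}(\widetilde{\D}))\bigr)$ yields the stated inclusion-exclusion formula.
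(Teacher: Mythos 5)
Your evaluation of $\chi\bigl(\gr(\check{C}^{o}(\widetilde{\D}))\bigr)$ via Leinster's Grothendieck-construction formula is exactly the paper's computation: the paper also regards $\check{C}^{o}(\widetilde{\D})$ as a diagram over $\Delta_{A^{\sharp}}$, uses the coweighting $v[n]=(-1)^{n}$ (your binomial verification $\sum_{i=0}^{n}(-1)^{i}\binom{n+1}{i+1}=1$ is a correct check of it), and sums over the coproduct decomposition of each fibre; the dualization for filter covers also matches. The genuine gap is in your other computation, the identity $\chi\bigl(\gr(\check{C}^{o}(\widetilde{\D}))\bigr)=\chi(C)$. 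You propose to deduce it from the homotopy equivalence of classifying spaces together with an ``identification of $\chi$ with the Euler characteristic of the classifying space,'' and you claim the standing hypotheses (finiteness plus existence of Euler characteristics) license that identification. They do not. Leinster's $\chi$ agrees with the topological Euler characteristic of $BC$ only under much stronger hypotheses (essentially that $C$ is skeletal with no nonidentity endomorphisms, so that the nerve has finitely many nondegenerate simplices); mere existence of an Euler characteristic is far from sufficient. Concretely: the one-object category whose single nonidentity morphism is an idempotent $e=e^{2}$ is finite and has $\chi=1/2$, yet its classifying space is contractible, so any homotopy-invariant notion assigns it $1$; a finite group $G$, viewed as a one-object category, has $\chi=1/|G|$ while $BG$ is an infinite complex with no classical Euler characteristic at all. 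Nothing in the theorem's hypotheses excludes idempotents, endomorphisms, or isomorphisms in $C$, in the intersections $D_{a_{0}\ldots a_{i}}$, or in $\gr(\check{C}^{o}(\widetilde{\D}))$, so your bridge between the combinatorial and topological invariants breaks on admissible inputs --- exactly at the step you yourself flagged as the delicate one.

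The repair is already in your hands, and it is what the paper does: avoid classifying spaces entirely in this step. Proposition \ref{finite_adjoint} gives an adjunction $\pi \dashv \tilde{\rho}$ between $C$ and $\gr(\check{C}^{o}(\widetilde{\D}))$ --- you cited it, but only as a source of the homotopy equivalence --- and Leinster's Proposition 2.4 states that if there exist adjoint functors between two finite categories both of which have Euler characteristics, then those Euler characteristics coincide. The existence of $\chi\bigl(\gr(\check{C}^{o}(\widetilde{\D}))\bigr)$ comes out of the same application of Leinster's Proposition 2.8 that you use to evaluate it, and $\chi(C)$ exists by hypothesis, so Proposition 2.4 applies and yields $\chi\bigl(\gr(\check{C}^{o}(\widetilde{\D}))\bigr)=\chi(C)$ with no topological input. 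With that single substitution your argument becomes the paper's proof; as written, however, the comparison step is unsound.
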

\begin{proof}
Let $\D$ be an ideal cover of $C$.
The category $\Delta_{A^{\sharp}}$ has a unique coweighting $v[n]=(-1)^{n}$ (see Example 3.4 (d) of \cite{Lei08}).
By applying Proposition 2.8 of \cite{Lei08} to $\gr (\check{C}^{o}(\widetilde{\D}))$, we have
\[
\chi (\gr(\check{C}^{o}(\widetilde{\D})))= \sum_{i=0}^{A^{\sharp}-1} (-1)^{i}\chi(\check{C}^{o}(\widetilde{\D})_{i})=\sum_{i=0}^{A^{\sharp}-1}\sum_{a_{0}< \ldots <a_{i}}(-1)^{i}\chi(D_{a_{0} \ldots a_{i}}).
\] 
Then, Proposition 2.4 of \cite{Lei08} and our Proposition \ref{finite_adjoint} imply the result.
If $\D$ is a filter cover of $C$, then $\D^{\op}$ is an ideal cover of $C^{\op}$. 
Since the equality $\chi(X)=\chi(X^{\op})$ holds for any finite category $X$ having Euler characteristic, we obtain the desired formula.
\end{proof}

Note that the order on the index set is not essential, since we can choose a total order on any finite set, and Proposition \ref{independent_order} holds in the case of reduced \v{C}ech nerves too.

\begin{corollary}
Let both $A$ and $B$ be filters or ideals of a finite category $C$. If each of $A$, $B$, $A \cap B$, and $A \cup B$ has Euler characteristic, then we have
\[
\chi(A \cup B)=\chi(A)+\chi(B)-\chi(A \cap B).
\]
\end{corollary}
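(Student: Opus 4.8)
The plan is to apply Theorem~\ref{inclusion-exclusion} to the two-element cover $\{A, B\}$ of the category $A \cup B$, rather than of $C$ itself. First I would take the index set to be $\{1 < 2\}$, set $D_{1} = A$ and $D_{2} = B$, and regard $\D = \{D_{1}, D_{2}\}$ as a collection of subcategories of $A \cup B$. By the definition of the union of subcategories, $\D$ is a cover of $A \cup B$, and since the index set has cardinality $2$, the outer index $i$ in the inclusion-exclusion sum ranges only over $i = 0$ and $i = 1$.

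The key point to check is that $\D$ is an ideal cover (respectively a filter cover) of $A \cup B$ whenever $A$ and $B$ are ideals (respectively filters) of $C$. I would argue this as follows. Each of $A, B$ is full in $A \cup B$, because it is full in $C$ and the inclusions $A(x, y) \subseteq (A \cup B)(x, y) \subseteq C(x, y) = A(x, y)$ force equality for objects $x, y$ of $A$. For the ideal condition, every morphism of $A \cup B$ is in particular a morphism of $C$; hence if $(A \cup B)(y, x) \not= \emptyset$ for some object $x$ of $A$, then $C(y, x) \not= \emptyset$, and the assumption that $A$ is an ideal of $C$ forces $y$ to lie in $A$. The same reasoning applies to $B$, and dually in the filter case. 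Thus $\{A, B\}$ is an ideal (filter) cover of $A \cup B$, and Theorem~\ref{inclusion-exclusion}, which treats both kinds of cover, applies directly.

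With this in place I would evaluate the formula of Theorem~\ref{inclusion-exclusion} for the cover $\{A, B\}$ of $A \cup B$. The hypothesis that each intersection $D_{a_{0} \ldots a_{i}}$ and the ambient category have Euler characteristics becomes exactly the hypothesis of the corollary, since the relevant categories are $D_{1} = A$, $D_{2} = B$, $D_{12} = A \cap B$, and $A \cup B$. The term $i = 0$ contributes $\chi(A) + \chi(B)$, while the single term $i = 1$ with $a_{0} = 1 < a_{1} = 2$ contributes $-\chi(D_{12}) = -\chi(A \cap B)$, yielding
\[
\chi(A \cup B) = \chi(A) + \chi(B) - \chi(A \cap B).
\]
I expect no serious obstacle: the statement is essentially the case of a two-element index set in Theorem~\ref{inclusion-exclusion}. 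The only point needing care is shifting the ambient category from $C$ to $A \cup B$ and observing that the ideal/filter property is inherited, which is precisely what makes the theorem applicable.
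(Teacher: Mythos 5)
Your proof is correct and is precisely the paper's intended derivation: the corollary is stated without proof as the two-element case of Theorem~\ref{inclusion-exclusion}, applied to the cover $\{A,B\}$ of the finite category $A \cup B$ with index set $\{1<2\}$. Your explicit check that fullness and the ideal (filter) condition pass from the ambient category $C$ down to $A \cup B$ fills in the one detail the paper leaves implicit.
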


Institute of Social Sciences, School of Humanities and Social Sciences, Academic Assembly, Shinshu University, Japan.

\textit{E-mail address}: tanaka@shinshu-u.ac.jp

\end{document}